\documentclass[12pt,a4paper,reqno]{amsart}

\usepackage{amsmath}

\usepackage{enumitem}
\usepackage[a4paper, margin=1.1in]{geometry}

\usepackage[utf8]{inputenc}
\usepackage[T1]{fontenc}
\usepackage[english]{babel}
\usepackage[babel]{microtype}
\usepackage[dvipsnames]{xcolor}

\usepackage{csquotes}

\usepackage{amsmath,amssymb,amsthm}

\usepackage{mathrsfs}

\usepackage{mathtools}
\usepackage{commath}

\usepackage{thmtools}
\usepackage{thm-restate}

\usepackage{bbm}
\usepackage{dsfont}
\usepackage{lmodern}
\usepackage{fixcmex}

\usepackage{cases}
\usepackage{enumitem}
\setlist[enumerate,1]{label=(\roman*)}

\usepackage{centernot}
\usepackage{booktabs}
\usepackage{multirow}
\usepackage{comment}
\usepackage{float}
\usepackage{scalerel}
\usepackage{xspace}

\usepackage[pdfborderstyle={/S/U/W 0},unicode]{hyperref}
\hypersetup{
  colorlinks=true,
  linkcolor=blue!85!black,
  citecolor=blue!85!black,
  urlcolor=blue!85!black,
}

\usepackage[noabbrev, nameinlink]{cleveref}

\usepackage{etoolbox}

\usepackage{newunicodechar}

\usepackage{subfiles}
\usepackage{pdfpages}

\numberwithin{equation}{section}

\ifdefined\checkunusedreferences
\usepackage{refcheck}

\makeatletter
\newcommand{\alsocheck}[1]{%
  \expandafter\let\csname @@\string#1\endcsname#1%
  \expandafter\DeclareRobustCommand\csname relax\string#1\endcsname[1]{%
  \csname @@\string#1\endcsname{##1}\@for\@temp:=##1\do{\wrtusdrf{\@temp}\wrtusdrf{{\@temp}}}}%
  \expandafter\let\expandafter#1\csname relax\string#1\endcsname
}
\newcommand{\alsocheckrange}[1]{%
  \expandafter\let\csname @@\string#1\endcsname#1%
  \expandafter\DeclareRobustCommand\csname relax\string#1\endcsname[2]{%
  \csname @@\string#1\endcsname{##1}{##2}\wrtusdrf{##1}\wrtusdrf{{##1}}\wrtusdrf{##2}\wrtusdrf{{##2}}}%
  \expandafter\let\expandafter#1\csname relax\string#1\endcsname
}
\makeatother

\alsocheck{\cref}
\alsocheck{\Cref}
\alsocheckrange{\crefrange}
\alsocheckrange{\Crefrange}

\else
\fi

\ifdefined\thmcolor
\declaretheoremstyle[
  shaded={bgcolor=\thmcolor,
  }
]{plain}
\else
\fi

\ifdefined\defcolor
\declaretheoremstyle[
  headfont=\normalfont\bfseries,
  bodyfont=\normalfont,
  shaded={bgcolor=\defcolor}
]{noital}
\else
\declaretheoremstyle[
  headfont=\normalfont\bfseries,
  bodyfont=\normalfont,
]{noital}
\fi

\declaretheorem[style=plain,numberwithin=section,name=Theorem]{theorem}

\declaretheorem[style=plain,sibling=theorem,name=Proposition]{proposition}

\declaretheorem[style=plain,sibling=theorem,name=Lemma]{lemma}

\declaretheorem[style=plain,sibling=theorem,name=Corollary]{corollary}

\declaretheorem[style=plain,sibling=theorem,name=Question]{question}

\declaretheorem[style=plain,numbered=no,name=Theorem]{theoremn-n}
\declaretheorem[style=plain,numbered=no,name=Theorems]{theorems-n}
\declaretheorem[style=plain,numbered=no,name=Proposition]{proposition-n}
\declaretheorem[style=plain,numbered=no,name=Propositions]{propositions-n}
\declaretheorem[style=plain,numbered=no,name=Lemma]{lemma-n}
\declaretheorem[style=plain,numbered=no,name=Lemmas]{lemmas-n}
\declaretheorem[style=plain,numbered=no,name=Corollary]{corollary-n}
\declaretheorem[style=plain,numbered=no,name=Corollaries]{corollaries-n}
\declaretheorem[style=plain,numbered=no,name=Conjecture]{conjecture-n}
\declaretheorem[style=plain,numbered=no,name=Conjectures]{conjectures-n}
\declaretheorem[style=plain,numbered=no,name=Question]{question-n}
\declaretheorem[style=plain,numbered=no,name=Questions]{questions-n}
\declaretheorem[style=plain,numbered=no,name=Claim]{claim-n}
\declaretheorem[style=plain,numbered=no,name=Claims]{claims-n}
\declaretheorem[style=plain,numbered=no,name=Fact]{fact-n}
\declaretheorem[style=plain,numbered=no,name=Facts]{facts-n}
\declaretheorem[style=plain,numbered=no,name=Problem]{problem-n}
\declaretheorem[style=plain,numbered=no,name=Problems]{problems-n}
\declaretheorem[style=plain,numbered=no,name=Open Problem]{openproblem-n}
\declaretheorem[style=plain,numbered=no,name=Open Problems]{openproblems-n}
\declaretheorem[style=plain,numbered=no,name=Challenge]{challenge-n}
\declaretheorem[style=plain,numbered=no,name=Challenges]{challenges-n}
\declaretheorem[style=plain,numbered=no,name=Exercise]{exercise-n}
\declaretheorem[style=plain,numbered=no,name=Exercises]{exercises-n}
\declaretheorem[style=plain,numbered=no,name=Property]{property-n}
\declaretheorem[style=plain,numbered=no,name=Properties]{properties-n}

\declaretheorem[style=noital,numbered=no,name=Remark]{remark-n}
\declaretheorem[style=noital,numbered=no,name=Remarks]{remarks-n}
\declaretheorem[style=noital,numbered=no,name=Definition]{definition-n}
\declaretheorem[style=noital,numbered=no,name=Definitions]{definitions-n}
\declaretheorem[style=noital,numbered=no,name=Construction]{construction-n}
\declaretheorem[style=noital,numbered=no,name=Constructions]{constructions-n}
\declaretheorem[style=noital,numbered=no,name=Observation]{observation-n}
\declaretheorem[style=noital,numbered=no,name=Observations]{observations-n}
\declaretheorem[style=noital,numbered=no,name=Example]{example-n}
\declaretheorem[style=noital,numbered=no,name=Examples]{examples-n}

\newcommand{\defined}{\mathrel{\coloneqq}}

\renewcommand{\le}{\leqslant}
\renewcommand{\leq}{\leqslant}
\renewcommand{\ge}{\geqslant}
\renewcommand{\geq}{\geqslant}

\let\oldexists\exists
\let\exists\relax
\DeclareMathOperator{\exists}{\:\!\oldexists}
\let\oldforall\forall
\let\forall\relax
\DeclareMathOperator{\forall}{\:\!\oldforall}

\newcommand{\st}{\mathbin{\colon}}
\undef{\set}
\DeclarePairedDelimiter{\set}{\lbrace}{\rbrace}

\undef{\emptyset}
\newcommand{\emptyset}{\varnothing}

\DeclarePairedDelimiter{\card}{\lvert}{\rvert}

\undef{\mod}
\newcommand{\mod}[1]{\ (\mathrm{mod}\ #1)}

\undef{\abs}
\DeclarePairedDelimiterX{\abs}[1]
{\lvert}{\rvert}{\ifblank{#1}{\,\cdot\,}{#1}}
\undef{\norm}
\DeclarePairedDelimiterX{\norm}[1]
{\lVert}{\rVert}{\ifblank{#1}{\,\cdot\,}{#1}}
\DeclarePairedDelimiterX{\inner}[2]
{\langle}{\rangle}{\ifblank{#1}{\,\cdot\,}{#1},\ifblank{#2}{\,\cdot\,}{#2}}

\DeclareMathDelimiter{\given}
{\mathbin}{symbols}{"6A}{largesymbols}{"0C}
\DeclareMathOperator{\Prob}{\mathbb{P}}
\DeclarePairedDelimiterXPP{\prob}[1]
{\Prob}{\lparen}{\rparen}{}
{\renewcommand{\given}{\nonscript\;\delimsize\vert\nonscript\;\mathopen{}}#1}
\DeclareMathOperator{\Expec}{\mathbb{E}}
\DeclarePairedDelimiterXPP{\expec}[1]
{\Expec}{\lbrack}{\rbrack}{}
{\renewcommand{\given}{\nonscript\;\delimsize\vert\nonscript\;\mathopen{}}#1}
\DeclareMathOperator{\Var}{Var}
\DeclarePairedDelimiterXPP{\var}[1]
{\Var}{\lparen}{\rparen}{}
{\renewcommand{\given}{\nonscript\;\delimsize\vert\nonscript\;\mathopen{}}#1}
\DeclareMathOperator{\Cov}{Cov}
\DeclarePairedDelimiterXPP{\cov}[2]
{\Cov}{\lparen}{\rparen}{}{#1,#2}

\newcommand{\eps}{\varepsilon}

\newcommand{\NN}{\mathbb{N}}

\newcommand{\RR}{\mathbb{R}}

\newcommand{\ZZ}{\mathbb{Z}}

\newcommand{\dimF}{\dim_\mathrm{F}}
\newcommand{\Span}{\operatorname{span}}
\newcommand{\aff}{\operatorname{aff}}
\newcommand{\dimA}{\dim_{\aff}}

\renewcommand{\subset}{\subseteq}

\newcommand{\citeauthordash}[1]{%
  \begingroup
  \DeclareDelimFormat{multinamedelim}{\textendash}%
  \DeclareDelimFormat{finalnamedelim}{\textendash}%
  \citeauthor{#1}%
  \endgroup
}

\title{Dense sets without large sumsets}

\author{Gabriel Dahia \and Jo\~ao Pedro Marciano \and Victor Souza}

\address{IMPA, Estrada Dona Castorina 110, Jardim Bot\^anico, Rio de Janeiro, 22460-320, Brasil}
\email{\{gabriel.dahia, joao.marciano\}@impa.br}
\address{Department of Mathematics, Cornell University, Ithaca NY 14853, USA; and Sidney-Sussex
College, Cambridge, CB2 3HU, United Kingdom}
\email{vsouza@cornell.edu, vss28@cam.ac.uk}

\usepackage[backend=biber, style=ext-numeric, natbib=true, safeinputenc=true,
  maxbibnames=99, giveninits=true,
url=false, isbn=false, doi=false, articlein=false, mincitenames=99, maxcitenames=100, sorting=nyt]{biblatex}
\DeclareFieldFormat{pages}{#1}

\usepackage{graphicx}

\begin{document}

\begin{abstract}
  We prove, for all fixed $0 < \delta < 1$, and all sufficiently large $n$, that there exists $S
  \subset [n]$ with $\card{S} \ge \delta n$ such that \(A + B \not \subset S\) for all ${A, B
  \subset \NN}$ satisfying
  $$\min\big\{\card{A}, \card{B}\big\} \ge \big(3 + o(1)\big) \frac{\log n }{ \log (1 / \delta)}.$$
  A very recent result of \citeauthor{hernandez2026+growth} shows that our bound is sharp up to a
  factor of 3, and together our results settle a conjecture of \citeauthor{kra2025problems}.
  In fact, we prove that a $\delta$-dense random subset of $[n]$ is a valid choice for $S$ with
  high probability, and that one can take $n^{-\alpha} \le \delta \le 1 - c$ where $c > 0$ is
  fixed and $\alpha > 0$ depends only on the $o(1)$ error, answering another question of the same
  authors in a strong form.
\end{abstract}

\maketitle


\section{Introduction}\label{sec:intro}

A simple consequence of the infinite version of Ramsey's theorem is that for every finite
coloring of $\NN$, there exists a monochromatic sumset
\[
  A + B = \{a + b : a \in A, \, b \in B\}
\]
for infinite sets $A, B \subset \NN$.
Erd\H{o}s conjectured that the same holds in every set $S \subset \NN$ with positive upper
density~\cite{erdos1980old}, that is, every set $S \subset \NN$ satisfying
\[
  \limsup_{n \to \infty} \frac{\card[\big]{S \cap [n]}}{n} > 0,
\]
where $[n] \defined \{1, \ldots, n\}$, should also contain $A + B$ for infinite sets $A, B \subset \NN$.

Erd\H{o}s' conjecture was finally proven by \citet{moreira2019proof} in
\citeyear{moreira2019proof} using ergodic theory.
Later, \citet{kra2024infinite} proved the following strengthening of this result: every $S
\subset \NN$ with positive upper density contains, for any $t \in \NN$, a sumset $A_1 + \cdots +
A_t$, where $A_1, \ldots, A_t \subset \NN$ are infinite.
Motivated by these successes, they formulated a number of questions and conjectures regarding
infinite configurations in sets of natural numbers with positive upper
density~\cite{kra2025problems}, some of which have already been solved~\cite{kra2026density}.

The authors in \cite[Section 4.3]{kra2025problems} also posed various quantitative and finite
analogs of these questions, and this is the focus of our work.
Very recently, \citet{hernandez2026+growth} answered both \cite[Question 4.9]{kra2025problems}
and its coloring version in the negative, establishing that no minimum growth rate exists for
infinite sumset patterns in sets with positive density.
They also made progress on another conjecture of \citet[Conjecture~4.10]{kra2025problems}, which
posits that for every fixed $\delta > 0$, there exist constants $C, c > 0$ such that the
following hold:
\begin{enumerate}
  \item For all $T \subset [n]$ satisfying $\card{T} \ge \delta n$, there are $X, Y \subset \NN$ such that
    \[
      \min\big\{\card{X}, \card{Y}\big\} \ge c \log n \qquad \text{ and } \qquad X + Y \subset T.
    \]\label{item:existenceOfSumsetsInDenseSets}
    \vspace{-10pt}
  \item There exists $S \subset [n]$ with $\card{S} \ge \delta n$ such that for all $A, B \subset
    \NN$ with
    \[
      \min\big\{\card{A}, \card{B}\big\} \ge C \log n,
    \]
    we have $A + B \not \subset S$.
    \label{item:denseSetWithoutLargeSumsets}
\end{enumerate}
\citeauthor{hernandez2026+growth} proved that \ref{item:existenceOfSumsetsInDenseSets} holds with
$c = 1 / \log (1 / \delta)$, and generalized it to $t$-fold sumsets, establishing a finitary analog of
\cite{kra2024infinite}: every $T \subset [n]$ satisfying $\card{T} \ge \delta n$ contains a set
of the form $X_1 + \cdots + X_t$, where
\begin{equation*}
  \min_{i \in [t]} \card{X_i} \ge \left( \frac{\log n}{\log (1 / \delta)} \right)^{1/(t - 1)}.
\end{equation*}
On the other hand, they remarked that \ref{item:denseSetWithoutLargeSumsets} ``seems out of reach''.

Our main contribution is proving that \ref{item:denseSetWithoutLargeSumsets} indeed holds, thus
establishing \cite[Conjecture~4.10]{kra2025problems}.
In fact, we show that we can take $C = \big(3 + o(1)\big) / \log (1 / \delta)$, leaving only a
factor of $3 + o(1)$ between the upper and lower bounds.

\begin{theorem}\label{thm:main}
  For every $0 < \gamma \le 1$ and $c > 0$, there exists $\alpha > 0$ such that the following
  holds for all sufficiently large $n \in \NN$.
  For all $n^{-\alpha} < \delta \le 1 - c$, there is $S \subset [n]$ with $\card{S} \ge \delta n$
  such that for every $A, B \subset \NN$ satisfying
  \[
    \min\big\{\card{A}, \card{B}\big\} \ge \frac{(3 + \gamma) \log n}{\log (1 / \delta)},
  \]
  we have \(A + B \not \subset S\).
\end{theorem}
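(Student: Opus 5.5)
Take $S$ to be a random subset of $[n]$, each element included independently with probability $p$ slightly above $\delta$, say $p = \delta(1+\eta)$. Chernoff then gives $\card{S} \ge \delta n$ with high probability, provided $\delta n \ge n^{1-\alpha}$. The work is to show that, with probability $1-o(1)$, $S$ contains no $A+B$ with $\card{A} = \card{B} = k \defined \lceil (3+\gamma)\log n / \log(1/\delta) \rceil$. Passing to subsets shows it is enough to treat $\card{A}=\card{B}=k$. By translation we may assume $\min A = 0$ and that $A+B \subset [n]$. The plan is a first-moment count. The naive union bound over all pairs $(A,B)$ fails, because there are about $n^{2k}$ pairs while the probability $p^{\card{A+B}}$ can be as large as $p^{2k-1}$ when $A,B$ are arithmetic progressions. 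So we must organise the count by the size of the sumset $m = \card{A+B}$.

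\textbf{Step 1 (small doubling is rare).} We bound the number $N(k,m)$ of sets $A+B \subset [n]$ arising from some pair $(A,B)$ with $\card{A}=\card{B}=k$ and $\card{A+B}=m$. We count the sumset $T = A+B$ rather than the pairs, since only $T \subset S$ matters. Two regimes arise.

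\emph{Large sumsets, $m \ge k^2/C$ or so.} The trivial encoding by $(A,B)$ costs $n^{2k}$ choices. This is beaten by $p^{m}$ as soon as $m \log(1/p) > 2k\log n$, i.e.\ $m > 2k^2/(3+\gamma)$ roughly. That is fine as long as $m$ is quadratic in $k$ with a good constant.

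\emph{Smaller $m$.} Here we need structure. By Plünnecke–Ruzsa and Freiman-type bounds, $A$ and $B$ are then dense in a generalised arithmetic progression, or at least they are determined by few parameters. The more efficient route I would try is an encoding argument instead: $T = A + B$ is determined by $B$ together with $A$, but $A$ must sit inside $\bigcap_{b\in B}(T-b)$. Fix $0\in A$ and reveal elements of $A$ one at a time. Each new element $a$ must satisfy $a+B \subset T$. The key observation is that when $\card{A+B}$ is small, a new $a$ adds few new elements to $A+B$. Hence each element of $A$ either costs a factor $n$ and contributes many new sumset elements (each paying $p$), or contributes few new elements and then has few choices, namely at most $\card{T}$-many, i.e.\ poly$(k)$. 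Balancing gives that each pair roughly costs $n^{3}$-ish per \lq\lq generic\rq\rq\ increment against $p^{\text{new elements}}$.

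\textbf{Step 2 (the exponent 3).} The constant $3$ should come from the worst case where $A$ and $B$ each have $O(1)$ \lq\lq free\rq\rq\ generators, e.g.\ $A,B$ both progressions with common difference $d$. These contribute about $n^{3}$ choices (start of $A$, start of $B$, difference) against $p^{2k-1}$. That requires only $(2k-1)\log(1/p) > 3\log n$, i.e.\ $k\gtrsim 1.5\log n/\log(1/\delta)$. So the binding constraint lies elsewhere, presumably in mixed structures such as $A$ a progression and $B$ arbitrary: $n^{k+2}$ choices against $p^{\sim 2k}$ or so. I would carry out the incremental exposure: order $A=\{a_1<\dots<a_k\}$ and $B=\{b_1,\dots,b_k\}$, and let $s_{i}$ be the number of new sums created when $a_i$ is added. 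A step with $s_i < k$ forces $a_i - a_j \in B-B$ coincidences, which costs only $k^2$ choices instead of $n$. Summing, the expected count is at most
\[
  \sum n^{k+1}\,(k^2)^{O(k)}\, n^{\#\{i: s_i \text{ large}\}} p^{\sum_i s_i},
\]
and the verification reduces to the bound $\sum_i s_i \ge$ (number of free steps)$\cdot(\log n/\log(1/p))$, with total slack $\gamma$.

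\textbf{Main obstacle.} The hard step is Step 2's accounting in the intermediate regime, where the sumset has size between linear and quadratic in $k$. There the choices for $A$ are neither $n$ nor $O(1)$ per element, and one needs a clean inequality (probably via the number of distinct differences $\card{B-B}$ and a Katona/Freiman-type bound) showing that cost per element never exceeds gain by more than a factor allowing constant $3+\gamma$. The $(k^2)^{O(k)}$ losses are $n^{o(k)}$ only if $\log k \ll \log n/k$, which holds when $\delta\ge n^{-\alpha}$ for small $\alpha$. This is exactly where the restriction $\delta > n^{-\alpha}$ enters, and where $\delta\le 1-c$ keeps $\log(1/p)$ comparable to $\log(1/\delta)$.
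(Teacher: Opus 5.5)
\textbf{There is a genuine gap: the counting in the small and moderate sumset regimes is missing, and the encoding you sketch cannot supply it.}

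Your outer frame matches the paper's: a random set of density slightly above $\delta$, Chernoff for $|S|\ge\delta n$, reduction to $|A|=|B|=k$, and a union bound stratified by $m=|A+B|$. The counting that actually carries the proof is what is missing, as you acknowledge yourself.

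Your displayed bound contains $n^{k+1}$ from choosing $B$ freely, while the gain is $p^{\sum_i s_i}=p^m\approx n^{-(3+\gamma)m/k}$. So it is $o(1)$ only for $m\gtrsim k^2/(3+\gamma)$.

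More fundamentally, when $m=O(k)$ the gain is only $n^{-O(1)}$, so only $n^{O(1)}$ choices are affordable. Your encoding fails this in two ways:
\begin{itemize}
\item Paying $k^{O(1)}$ for each of the $k-O(1)$ constrained steps costs $e^{\Theta(k\log k)}=n^{\Theta(\log\log n)}$ for fixed $\delta$.
\item Even choosing $A,B$ inside a GAP of size $C''k$ costs $(eC'')^{2k}=n^{2(3+\gamma)\log(eC'')/\log(1/\delta)}$.
\end{itemize}
Your claim that such losses are harmless when $\delta\ge n^{-\alpha}$ is backwards. Since $\log n/k=\log(1/\delta)/(3+\gamma)$, the condition $\log k\ll\log n/k$ fails precisely for constant $\delta$. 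The paper uses $\delta>n^{-\alpha}$ only to make $k$ large.

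The missing idea is to union-bound over \emph{fingerprints}:
\begin{itemize}
\item The Bollob\'as--Leader--Tiba theorem gives $A'\subset A^*\subset A$ and $B'\subset B^*\subset B$ with $|A^*|,|B^*|\ge(1-\eps)k$, $|A'|,|B'|\le\beta\sqrt k$, and $|A'+B'|\ge(1-\eps)|A^*+B^*|$.
\item A Chang-type theorem in $\ZZ/p\ZZ$ places $A^*\cup B^*$ in a proper $d$-GAP of size $O(k)$ with $d\le\dimF(A^*\cup B^*)$.
\item Ruzsa's asymmetric Freiman lemma then gives $|A'+B'|\ge(1-3\eps)dk$.
\end{itemize}
This leaves only $n^{d+1+o(1)}$ choices against $\delta^{(1-3\eps)dk}$.

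For $Ck<m<k(k+1)/2$, the same asymmetric Freiman lemma gives $\dimF(A\cup B)\le 2m/k$. Green's count of sets of bounded Freiman dimension, $n^{r+1}t^{4t}$ (refined for $m\le k^{1+\xi}$), then beats $\delta^m$. Both of these regimes need only the constant $2+\gamma$.

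\textbf{Your account of the constant $3$ is also off, and this is where the remaining case lives.}

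Your trivial $n^{2k}$ count needs $m>2k^2/(3+\gamma)$, which exceeds $k^2/2$ when $\gamma<1$. At $m\approx k^2/2$ it gives $n^{2k}p^{k^2/2}\approx n^{(1-\gamma)k/2}$, so it would only yield the constant $4$. Meanwhile, for $m\ge k(k+1)/2$ the asymmetric Freiman lemma no longer bounds $\dimF(A\cup B)$, so the structural count cannot take over there.

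The paper closes this window with Ruzsa's covering lemma. For fixed $B$ one has $A\subset Z+B-B$ with $|Z|\le m/k$. This gives at most $n^{k+m/k}\binom{mk}{k}\le n^{3m/k}k^{3k}$ pairs once $m\ge k^2/2$, against $\delta^m=n^{-(3+\gamma)m/k}$. The exponent $k+m/k=3m/k$ at the threshold $m\approx k^2/2$ is exactly the source of the $3$, not progressions or ``mixed structures''.

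Your mixed example is inconsistent in any case. A progression $A$ plus an arbitrary $B$ typically has $|A+B|$ of order $k^2$, not $2k$. And if there really were $n^{k+2}$ pairs with $|A+B|\sim 2k$, no union bound could succeed for any constant.
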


\citet{kra2025problems} defined $\phi(\delta, n)$ to be the largest integer such that every $S
\subset [n]$ with $\card{S} \ge \delta n$ contains a sumset $A + B$ where $A, B \subset [n]$
satisfy $\min\big\{\card{A}, \card{B}\big\} \ge \phi(\delta, n)$.
Using this notation and combining \Cref{thm:main} with the result of
\citet[Corollary~C]{hernandez2026+growth}, we have the following \namecref{stmt:limInfLimSup},
which settles \cite[Conjecture~4.10]{kra2025problems}, and answers \cite[Question
4.12]{kra2025problems} in the negative.

\begin{corollary}\label{stmt:limInfLimSup}
  For every $0 < \gamma \le 1$ and $c > 0$, there exists $\alpha > 0$ such that
  \begin{equation*}
    \frac{1 - \gamma}{\log (1 / \delta)} \le \frac{\phi(\delta, n)}{\log n} \le \frac{3 +
    \gamma}{\log (1 / \delta)},
  \end{equation*}
  for all $n^{-\alpha} \le \delta \le 1 - c$ and sufficiently large $n$.
\end{corollary}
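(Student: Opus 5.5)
The corollary follows directly from combining \Cref{thm:main} with \citet[Corollary~C]{hernandez2026+growth}, one for each inequality.

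\emph{Upper bound.} Fix $\gamma$ and $c$, and let $\alpha_1 > 0$ be the constant given by \Cref{thm:main}. For $n^{-\alpha_1} < \delta \le 1 - c$ and $n$ large, the theorem produces $S \subset [n]$ with $\card{S} \ge \delta n$ that contains no $A + B$ with $\min\{\card{A},\card{B}\} \ge (3+\gamma)\log n / \log(1/\delta)$. Subsets $A, B \subset [n]$ are a special case of $A, B \subset \NN$, so this $S$ witnesses
\[
\phi(\delta,n) < \frac{(3+\gamma)\log n}{\log(1/\delta)},
\]
which is the right-hand inequality. The endpoint $\delta = n^{-\alpha}$ is handled by taking $\alpha < \alpha_1$: then $n^{-\alpha} > n^{-\alpha_1}$ for $n \ge 2$, so the whole closed range lies inside the theorem's range.

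\emph{Lower bound.} We invoke \citet[Corollary~C]{hernandez2026+growth}. I expect it to state that every $T \subset [n]$ with $\card{T} \ge \delta n$ contains $X + Y$ with $\min\{\card{X},\card{Y}\} \ge (1 - o(1)) \log n / \log(1/\delta)$, uniformly for $\delta \ge n^{-\alpha_2}$ for some $\alpha_2$ depending on the allowed error $\gamma$. Two points need checking.

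First, the sets $X, Y$ must lie in $[n]$, as the definition of $\phi$ requires. If $X, Y \subset \NN$ satisfy $X + Y \subset T \subset [n]$, translate: replace $X$ by $X - \min X + 1$ and $Y$ by $Y + \min X - 1$. The sumset is unchanged, and both translated sets lie in $[n]$, because every element of $X' = X - \min X + 1$ is at most $\max(X+Y) - \min Y - \min X + 1 \le n$, and similarly for $Y'$. A possible $0 \in \NN$ convention is handled the same way.

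Second, the bound must hold uniformly with constant $1 - \gamma$ over $n^{-\alpha} \le \delta \le 1 - c$. The delicate case is $\delta$ near $1 - c$, where $\log(1/\delta)$ is bounded below only by $\log(1/(1-c)) > 0$. This is harmless because $c$ is fixed. If Corollary~C gives a weaker statement, I would instead rederive the bound with the standard greedy neighbourhood argument. Pick $a_1 < a_2 < \cdots$ iteratively so that the intersection $\bigcap_i (T - a_i)$ keeps density about $\delta^k$. After $k$ steps its size is at least $\delta^{k} n - O(\cdot)$, and this stays at least $k$ as long as $k \le (1-\gamma)\log n/\log(1/\delta)$, which is valid for $\delta \ge n^{-\alpha}$ with $\alpha$ small. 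Then take $Y$ to be this intersection.

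\emph{Conclusion.} Set $\alpha = \tfrac12 \min\{\alpha_1, \alpha_2\}$. Both inequalities then hold for all sufficiently large $n$ and all $n^{-\alpha} \le \delta \le 1 - c$. The only real obstacle is confirming that the external result gives the constant $1 - \gamma$ with the required uniformity in $\delta$, since the upper bound is an immediate restatement of \Cref{thm:main}.
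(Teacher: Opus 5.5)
Your proposal is correct and follows the paper's route: the right-hand inequality comes directly from \Cref{thm:main}, and the left-hand inequality is imported from \citet[Corollary~C]{hernandez2026+growth}. Your shrinking of $\alpha$ to cover the closed endpoint $\delta = n^{-\alpha}$ and your translation placing $X, Y$ inside $[n]$ are fine; the paper leaves these details implicit.

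One caution, on your fallback only. Choosing shifts one at a time greedily does not by itself preserve a factor $\delta$ per step. Averaging over all shifts in $[1-n, n-1]$ gives only $\delta/2$, which spoils the constant $1-\gamma$ when $\delta$ is bounded away from $0$. If you ever need that argument, take a short interval $D = \{0, \ldots, \lfloor \eps n \rfloor\}$ of shifts, average over all $k$-subsets $X \subset D$ at once, and apply Jensen's inequality to $\sum_y \binom{\card{T \cap (y + D)}}{k}$.
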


To prove \Cref{thm:main}, we follow the direction suggested by \citet{kra2025problems}, taking
$S$ to be a $\delta$-random subset of $[n]$, that is, each element of $[n]$ is included in $S$
independently with probability $\delta$.
This leads to the following strengthening of \Cref{thm:main}.

\begin{theorem}\label{stmt:randomMain}
  For every $0 < \gamma \le 1$ and $c > 0$, there exists $\alpha > 0$ such that the following
  holds for all sufficiently large $n \in \NN$.
  For all $n^{-\alpha} < \delta \le 1 - c$, if $S \subset [n]$ is a $\delta$-random set, then
  \begin{equation}\label{eq:probabilityThatWeBound}
    \prob[\Big]{\exists A, B \subset \NN \text{ with } \min\big\{\card{A}, \card{B}\big\} \ge k :
    A + B \subset S} \to 0
  \end{equation}
  as $n \to \infty$, where $k = (3 + \gamma) \log n / \log (1 / \delta)$.
\end{theorem}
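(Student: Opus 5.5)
We will prove \Cref{stmt:randomMain} with a first moment bound over a carefully chosen family of witnesses. A naive union bound over all pairs $(A,B)$ with $|A|=|B|=k$ fails, because $A+B$ may be as small as about $2k$ when $A,B$ are arithmetic progressions, and then $\delta^{|A+B|}$ is far too large to absorb the $\binom{n}{k}^2$ choices. The strategy is therefore to trade off the number of choices against the size of the sumset.

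First we normalise. By translation we may assume $\min A = 0$, so $B \subset [n]$ and $A \subset [0,n]$. It suffices to take $|A| = |B| = k$.

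The core step is a structural dichotomy. We build $A$ greedily, $a_1 = 0, a_2, \dots$, and track the growth of $A_i + B$ where $A_i = \{a_1,\dots,a_i\}$. Each new element $a_{i+1}$ adds $m_i = |(a_{i+1}+B) \setminus (A_i+B)|$ new elements, so $|A+B| = k + \sum_i m_i$. We then count configurations by revealing $B$ first, at cost $n^{k}$, together with the requirement $B \subset S$, at cost $\delta^{k}$. After that we reveal each $a_{i+1}$:
\begin{itemize}
  \item if $m_i$ is large, we pay at most $n$ choices and gain $\delta^{m_i}$;
  \item if $m_i$ is small, $a_{i+1}+B$ mostly lies in $A_i + B$, so $a_{i+1} \in (A_i+B) - b$ for many $b \in B$.
\end{itemize}
In the second case we argue that $a_{i+1}$ can be specified with far fewer than $n$ choices, for instance by naming $b\in B$ and an element of $A_i+B$, which costs only $\mathrm{poly}(k)$ choices.

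Combining the two cases gives roughly
\[
\prob{\cdot} \le n^{k}\delta^{k}\prod_i \min\{\, n\,\delta^{m_i},\ \mathrm{poly}(k) \,\}.
\]
We want this to tend to $0$. With $L = \log n/\log(1/\delta)$ and $k = (3+\gamma)L$, a step with $m_i \ge (1+\gamma/10)L$ contributes a factor at most $n^{-\gamma/10}$, and small-$m_i$ steps contribute $\mathrm{poly}(k)$. We also need a lower bound on the total $\sum m_i$ in terms of $k$, via a Freiman/Plünnecke-type or Cauchy–Davenport-type inequality in $\ZZ$ (for instance $|A+B| \ge |A|+|B|-1$, refined when few steps are "cheap"). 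This should balance the $n^k\delta^k$ cost against the $\delta^{|A+B|}$ gain. The constant $3$ should arise from optimizing this balance: the trivial cost is $n^{2k}$ against $\delta^{|A+B|}$, and we need $|A+B|$ effectively larger than about $2k\cdot L/k$-scale terms plus correction. Applying the argument symmetrically (revealing $A$ partially, then $B$) will be needed.

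The main obstacle is the intermediate regime, where $A+B$ has size between linear in $k$ and $k^2$ and $A,B$ are only partially structured, for example unions of a few progressions. In that regime neither the cheap-encoding case nor the high-gain case dominates. I would first try to handle it with the greedy encoding above, choosing a threshold of $m_i$ around $L$. Then I would bound the number of cheap steps using that cheap steps force $a_{i+1}$ into a set of size $O(k^2)$, which gives $(k^2)^{k}=n^{o(k)}$ total. This is fine when $\log k = o(\log n/k)$, i.e.\ $k^2\log k \ll \log n$, and that fails in our range. So the cheap-step encoding must be sharpened: $a_{i+1}$ determined by few parameters, plus some amortisation across steps. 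The range $\delta \ge n^{-\alpha}$ should enter precisely here, by keeping $k \ge (3+\gamma)/\alpha$ large enough that lower-order $\mathrm{poly}(k)$ losses are absorbed into the $\gamma$ slack.
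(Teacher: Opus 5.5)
There is a genuine gap: your proposal has no mechanism for pairs with small or moderate sumsets, and the bookkeeping you set up cannot work there even in principle. Since $k=(3+\gamma)\log n/\log(1/\delta)$, one has $\delta^{|A+B|}=n^{-(3+\gamma)|A+B|/k}$. In particular $\delta^{k}=n^{-(3+\gamma)}$ is only a fixed power of $n$, while you charge $n^{k}$ for $B$. Each factor $\min\{n\delta^{m_i},\mathrm{poly}(k)\}$ is at least $\delta^{m_i}$, so your displayed bound is at least $n^{k}\delta^{|A+B|}=n^{k-(3+\gamma)|A+B|/k}$. This is $\ge 1$ whenever $|A+B|\le k^2/(3+\gamma)$, however cleverly $A$ is encoded afterwards.

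Relatedly, a step with $m_i\ge(1+\gamma/10)L$ contributing $n^{-\gamma/10}$ does not even repay the factor $n$ spent on one element of $B$. If every step is of this kind, your estimate only yields $n^{k-3-\gamma-\gamma(k-1)/10}\to\infty$.

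In the most dangerous regime $|A+B|=O(k)$, the entire available gain is $\delta^{O(k)}=n^{-O(1)}$. There you can afford neither to list $B$ nor to pay $\mathrm{poly}(k)$ per cheap step: the latter costs $\exp(\Theta(k\log k))$, and $k\log k\gg\log n$ for fixed $\delta$. You notice this yourself (``must be sharpened \dots\ plus some amortisation''), but that sharpening is the whole difficulty, and no idea for it is given. A Cauchy--Davenport bound $|A+B|\ge|A|+|B|-1$ cannot help either. The gain must grow linearly with the Fre\u{\i}man dimension $d$ of the configuration to pay for the roughly $n^{d+1}$ ways of placing it.

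The paper instead splits pairs of $k$-sets by $|A+B|$ into three regimes, each with its own counting device.

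\emph{Small sumsets, $|A+B|\le Ck$.} The Bollob\'as--Leader--Tiba theorem gives fingerprints $A'\subset A^*\subset A$, $B'\subset B^*\subset B$ with $|A'|,|B'|\le\beta\sqrt{k}$ and $|A'+B'|\ge(1-\varepsilon)|A^*+B^*|$. After moving to $\ZZ/p\ZZ$, a Chang-type theorem places $A^*\cup B^*$ in a proper GAP $P$ with $|P|=O(k)$ and $\dim P\le r=\dimF(A^*\cup B^*)$. One then pays roughly $n^{d+1}$ for $P$ and only $\exp(O(\sqrt{k}\log k))=n^{o(1)}$ for $(A',B')$. This is beaten by $\delta^{|A'+B'|}$, because Ruzsa's asymmetric Fre\u{\i}man lemma gives $|A'+B'|\ge(1-3\varepsilon)rk$.

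\emph{Moderate sumsets, $Ck<|A+B|<k(k+1)/2$.} The paper union bounds over sumsets $Y=A+B$ rather than over pairs. It counts $X=A\cup B$ by Fre\u{\i}man dimension (Green's count), using $\dimF(A\cup B)\le 2|A+B|/k$, which again comes from the asymmetric Fre\u{\i}man lemma. These first two regimes already work with $2+\gamma$ in place of $3+\gamma$.

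\emph{Large sumsets, $|A+B|\ge k(k+1)/2$.} Only here is a crude count enough. Ruzsa's covering lemma gives at most $n^{k+m/k}\binom{mk}{k}$ pairs with $|A+B|\le m$. Bounding $k\le 2m/k$ at this step is precisely where the constant $3$ comes from, not from balancing a greedy process.
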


Events like the one in \eqref{eq:probabilityThatWeBound} are closely related to the study of
random Cayley sum graphs.
In his pioneering work, \citet{green2005counting} showed that \eqref{eq:probabilityThatWeBound}
holds for $\delta = 1/2$ with ${k = 160 \log n}$ in the case $A = B$, which is the one of interest to
determine the clique number of such graphs.
The leading constant $160$ was then sharpened to its asymptotically optimal value of $2$ by
\citet{green2016counting}, still in the setting of $\delta = 1/2$ and $A = B$.
This was recently extended by \citet{campos2024on} to all
\begin{equation}\label{eq:indNumberCayley}
  (\log n)^{-1/80} \le 1 - \delta \le 1/2
\end{equation}
where \eqref{eq:indNumberCayley} features $1 - \delta$ instead of $\delta$ because they study the
independence number of these graphs.
Later, \citet{nenadov2025+remark} improved the exponent of the $\log$ in
\eqref{eq:indNumberCayley} to $-1/3 + o(1)$, and \citet{alon2025+random} announced another
improvement to $-2 + o(1)$.

In our proof of \Cref{stmt:randomMain}, we build on the techniques of \cite{green2005counting} and
\cite{campos2024on}.
The starting point of our proof is the same as in those works, which was pioneered in
\cite{green2005counting}: first, we decompose the family of all possible sets $A$ and $B$ with
size $k$ into three distinct collections depending on the size of their sumset.
We then take a union bound on the probability in \eqref{eq:probabilityThatWeBound} for each of
the resulting collections, using different techniques for each of them.
For sufficiently large sumsets, we either follow the strategy of \citet{green2005counting} based
on counting equivalence classes of Fre\u{\i}man isomorphisms, adapting his results to the
$A \neq B$ setting, or use a simple argument based on \citeauthor{ruzsa1989application}'s covering lemma.

The most challenging part of our proof is in the range of small sumsets, where we adopt the
general strategy of \citet{campos2024on}.
This allows us to avoid counting sets $A \subset [n]$ with $\card{A} = k$ and $\card{A + A} \le
m$, and count instead fingerprints $F \subset A$ with
\begin{equation}\label{eq:fingerprintProps}
  \card{F} \approx \sqrt{\card{A + A}} \qquad \text{and} \qquad \card{F + F} \approx \card{A + A}.
\end{equation}
To ensure that the second approximation in \eqref{eq:fingerprintProps} holds, the authors of
\cite{campos2024on} prove a ``few-translates'' version of Fre\u{\i}man's lemma in convex
geometry~\cite[Lemma~1.14]{freiman1973foundations}.
By restricting the smaller sumset range to $\card{A + B} \le O(k)$, we can rely on
\Cref{stmt:bltMedium}, a corollary of \citet[Theorem 2]{bollobas2022+large}, to directly
find fingerprints $A' \subset A$ and $B' \subset B$.
We then use the asymmetric version of Fre\u{\i}man's lemma due to \citet{ruzsa1994sum} (see
\Cref{stmt:asymFreiman}) to ensure that $\card{A' + B'}$ is sufficiently large.

\begin{theorem}[{\cite[Theorem 2]{bollobas2022+large}}]\label{stmt:bltMedium}
  For all $\sigma > 0$ and $\eps > 0$, there exists $\beta \ge 1$ such that the following holds.
  For all $A$ and $B$ finite non-empty subsets of an Abelian group, there are subsets $A' \subset
  A^* \subset A$ and $B' \subset B^* \subset B$, with
  \begin{equation}\label{eq:propsOfBltSets}
    \card{A^*} \ge (1 - \eps) \card{A}, \quad \card{B^*} \ge (1 - \eps) \card{B}, \quad
    \card{A'} \le \beta k^{1/2} \quad \text{and} \quad \card{B'} \le \beta k^{1/2},
  \end{equation}
  where $k = \max\big\{\card{A}, \card{B}\big\}$, such that
  \begin{equation}
    \card{A' + B'} \ge \min\big\{ (1 - \eps)\card{A^* + B^*}, \, \sigma \card{A^*}, \, \sigma
    \card{B^*} \big\}.
  \end{equation}
\end{theorem}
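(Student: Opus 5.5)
The plan is to choose $A'$ and $B'$ randomly and to obtain $A^*$ and $B^*$ by deleting a few \emph{bad} elements. Write $r(x) = \card{\set{(a,b) \in A \times B \st a + b = x}}$ for the number of representations of $x \in A + B$. Fix a popularity threshold $\tau = \eta \min\set{\card{A},\card{B}}$, where $\eta = \eta(\sigma,\eps) > 0$ is small, and let $P = \set{x \in A + B \st r(x) \ge \tau}$ be the set of popular sums.

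\emph{Step 1: random fingerprints cover popular sums.} Take $A' \subset A$ and $B' \subset B$ to be independent $p$-random subsets with $p = \beta_0 k^{-1/2}$, where $\beta_0 = \beta_0(\eta, \eps)$ is a large constant. By Markov's inequality, $\card{A'}, \card{B'} \le \beta k^{1/2}$ holds with probability at least $3/4$ once $\beta \ge 8\beta_0$. For $x \in P$, the representations of $x$ form a matching between subsets of $A$ and $B$ of size $r(x) \ge \tau$. Distinct pairs in this matching are disjoint, so the events ``$(a,b) \in A' \times B'$'' for the pairs $(a,b)$ of the matching are mutually independent. Hence
\[
  \prob{x \notin A' + B'} \le (1 - p^2)^{r(x)} \le \exp(-\beta_0^2 \tau / k).
\]
Since $\tau / k \ge \eta \min\set{\card{A},\card{B}}/k$, this can be made at most $\eps/4$, at least when $\card{A}$ and $\card{B}$ are comparable. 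Markov's inequality then gives a choice of $A', B'$, with positive probability, for which $\card{(A' + B') \cap P} \ge (1 - \eps)\card{P}$ and the size bounds both hold.

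\emph{Step 2: handle unpopular sums by deletion.} Let $N$ be the number of pairs $(a,b)$ with $a + b \notin P$. Then $N \le \tau \card{A+B}$. There are two cases.

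\begin{itemize}
  \item If $\card{P} \ge \sigma \max\set{\card{A},\card{B}}$, then $\card{A' + B'} \ge (1-\eps)\sigma \max\set{\card{A},\card{B}}$, and we take $A^* = A$ and $B^* = B$. This case needs $\sigma$ replaced by $2\sigma$ at the start.
  \item Otherwise, I would iteratively delete from $A$ every element $a$ for which a proportion at least $\eps$ of its sums $a + b$ are unpopular, and similarly from $B$, and then recompute popularity relative to the current sets. At termination every element of $A^*$ (resp.\ $B^*$) has most of its sums popular. The aim is to show that $A^* + B^*$ is covered up to a $(1-\eps)$ fraction by the popular sums of the final pair, which Step 1, rerun on $(A^*, B^*)$, then captures.
\end{itemize}

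The quantity to control is the total number of deletions. Each deleted element carries at least $\eps \min\set{\card{A},\card{B}}$ unpopular pairs, so the number of deleted elements is $O\bigl(\tau \card{A+B}/(\eps \min\set{\card{A},\card{B}})\bigr) = O\bigl(\eta \card{A+B}/\eps\bigr)$. I would split on the size of $\card{A + B}$:

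\begin{itemize}
  \item If $\card{A+B} \le \sigma^2 \max\set{\card{A},\card{B}}$, say, choosing $\eta \ll \eps^2/\sigma^2$ keeps the number of deletions below $\eps \min\set{\card{A}, \card{B}}$, at least in the comparable-size case.
  \item If $\card{A+B}$ is larger, the set $P$ should already be large. Indeed, $\sum_{x \in P} r(x) \ge \card{A}\card{B} - \tau \card{A+B}$, which forces $\card{P} \ge \sigma \card{A^*}$ unless almost all mass is unpopular. In that case I would apply a Pl\"unnecke--Ruzsa or Balog--Szemer\'edi-type argument to find a large structured piece of $A \times B$ on which the argument restarts.
\end{itemize}

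\emph{Main obstacle.} The hard step is this second case, and in particular the asymmetric regime where $\card{A}$ and $\card{B}$ differ greatly. There the threshold $\tau$ relative to $k = \max\set{\card{A},\card{B}}$ is too small for Step 1's union bound. I would first try to handle it by sampling $A'$ and $B'$ with different densities $p_A$ and $p_B$ with $p_A p_B \approx \beta_0^2/k$, tuned so that both stay below $\beta k^{1/2}$ in size. The deletion iteration must then be run with the weights adjusted accordingly.
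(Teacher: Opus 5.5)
The paper gives no proof of this statement; it is imported from Bollobás--Leader--Tiba. Your sketch therefore has to stand on its own, and as it stands it has genuine gaps.

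\textbf{The large-sumset branch.} Step~1 is sound: the representations of $x$ form a matching, so $\mathbb{P}(x \notin A'+B') = (1-p^2)^{r(x)}$ exactly. The large-sumset branch of Step~2 is wrong, however.

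\emph{The mass inequality is too weak.} Since $r(x) \le \min\set{\card{A},\card{B}}$, the inequality $\sum_{x\in P} r(x) \ge \card{A}\card{B} - \tau\card{A+B}$ only gives $\card{P} \ge \max\set{\card{A},\card{B}} - \eta\card{A+B}$. This never exceeds $\max\set{\card{A},\card{B}}$, so it cannot give $\card{P} \ge \sigma\card{A^*}$ when $\card{A} = \card{B}$ and $\sigma > 1$. The paper uses $\sigma = 2C > 2$.

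\emph{The restart cannot rescue it.} Take $A = [n]$ and $B = \set{N, 2N, \ldots, nN}$ with $N > n$. All $n^2$ sums are distinct, so no pair of large subsets has even one popular sum, and your deletion rule empties both sets. Yet the statement holds trivially with $A^* = A$, $B^* = B$ and any $A'$, $B'$ of size $\lceil\sqrt{\sigma n}\,\rceil$.

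\emph{The missing observation.} Unpopular sums are not lost to the random fingerprint. By concavity of $t \mapsto 1-(1-p^2)^t$,
\[
  \mathbb{E}\,\card{A'+B'} = \sum_{x \in A+B} \bigl(1 - (1-p^2)^{r(x)}\bigr) \ge \bigl(1 - e^{-p^2\tau}\bigr)\Bigl(\card{P} + \frac{N}{\tau}\Bigr).
\]
So once $N \ge 2\sigma\tau\min\set{\card{A},\card{B}}$ and $p^2\tau \ge \log 2$, the $\sigma$-alternative already holds with $A^* = A$, $B^* = B$, after a routine adjustment for the size constraints.

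\emph{The asymmetric case.} The same computation shows you need $p_Ap_B \approx \beta_0^2/\min\set{\card{A},\card{B}}$, not $\beta_0^2/k$. The size constraints allow this: use density $\beta_0 k^{-1/2}$ on the larger set and $\beta_0 k^{1/2}/\min\set{\card{A},\card{B}}$ on the smaller. If the smaller set has size $O(\sqrt{k})$, the statement is trivial since $\card{A'+B'} \ge \card{A'}$.

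\textbf{The small-doubling regime.} What is left is the regime where only an $O(\sigma\eta)$-fraction of pairs have unpopular sums, but unpopular sums may still form a large part of $A+B$ (think of $A = B = [n]\cup T$ with $T$ a small generic set). This is where the substance of the theorem lies. It is also exactly the regime the paper needs ($\card{A}=\card{B}=k$, $\card{A+B}\le Ck$). Your deletion scheme does not settle it, for two reasons.

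\emph{Your stopping rule controls unpopular pairs, not unpopular sums.} Let $N^*$ count pairs of $A^*\times B^*$ with unpopular sum, and let $U^*$ be the set of unpopular sums. At termination you get $N^* < \eps\card{A^*}\card{B^*}$, and hence only $\card{U^*} \le N^* < \eps\card{A^*}\card{B^*}$. A sum of multiplicity one costs a single pair but counts fully in $\card{A^*+B^*}$. You need $\card{U^*} \le \eps\card{A^*+B^*}$, where $\card{A^*+B^*}$ may be of order $\max\set{\card{A},\card{B}}$. So the stopping condition falls short by a factor of order $\min\set{\card{A},\card{B}}$. Your sentence ``the aim is to show that $A^*+B^*$ is covered up to a $(1-\eps)$ fraction by the popular sums'' is precisely the unproved step.

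\emph{The deletion count is unjustified.} Your bound on the number of deletions is of order $\eta\card{A+B}/\eps \ge \tau/\eps$, so it always exceeds the threshold $\tau = \eta\min\set{\card{A},\card{B}}$. Each deletion of $a$ lowers $r(x)$ for every $x \in a+B$. With popularity recomputed, sums that were popular can therefore become unpopular during the process. The bound $N \le \tau\card{A+B}$, which refers to the original popularity, then no longer controls how many elements are removed. Freezing $P$ instead fails for the same reason: deletions beyond $\tau$ can erode the multiplicities that Step~1 needs to capture those sums.

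To complete the argument you need a genuinely additive ingredient. It must show that, after discarding an $\eps$-fraction of $A$ and of $B$, all but an $\eps$-fraction of $A^*+B^*$ have $\Omega_{\sigma,\eps}(\min\set{\card{A},\card{B}})$ representations, with the thresholds and number of rounds under control. The proposal does not supply this.
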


With \Cref{stmt:bltMedium}, we proceed similarly to \cite{campos2024on}, using
\citeauthor{chang2002polynomial}'s theorem~\cite{chang2002polynomial} to
reduce the overall number of choices for $A'$ and $B'$ (see \Cref{sec:lowerDoubling}).
In \Cref{sec:midDoubling}, we adapt the arguments of \citet{green2005counting} to the $A \neq B$
setting, and in \Cref{sec:largeDoubling} we describe the simple argument based on
\citeauthor{ruzsa1989application}'s covering lemma to handle very large sumsets.
We combine the results of the preceding sections into a proof of \Cref{stmt:randomMain} in
\Cref{sec:proofOfMain}, where we also deduce \Cref{thm:main}.
The final \namecref{sec:concludingRemarks} is dedicated to concluding remarks.

\section{A random set contains no small sumset}\label{sec:lowerDoubling}

The goal of this \namecref{sec:lowerDoubling} is to prove \Cref{stmt:lowerDoubling}, which will
bound the probability that a pair $A, B \subset [n]$ with $\card{A} = \card{B} = k$ and $\card{A
+ B} \le O(k)$ is contained in a $\delta$-random subset.

\begin{lemma}\label{stmt:lowerDoubling}
  For every $0 < \gamma \le 2$, $c > 0$ and $C > 1$, there exists $\alpha > 0$ such that the
  following holds for all sufficiently large $n \in \NN$.
  For all $n^{-\alpha} < \delta \le 1 - c$, if $S \subset [n]$ is a $\delta$-random set, then
  \begin{equation}\label{eq:probabilityLowerDoubling}
    \prob[\Bigg]{\exists (A, B) \in \binom{[n]}{k}^2,~ \card{A + B} \le C k : A + B \subset S}
    \le \frac{2 k}{n^{\gamma / 64}},
  \end{equation}
  where $k = (2 + \gamma) \log n / \log (1 / \delta)$.
\end{lemma}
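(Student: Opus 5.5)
We plan to follow the fingerprint strategy of \cite{campos2024on}: rather than union-bound over all pairs $(A,B)$ with small sumset, we union-bound over much smaller \emph{fingerprints} $(A',B')$ together with the sumset $A'+B'$. The fingerprints are supplied by \Cref{stmt:bltMedium}, and their number is controlled using Chang's theorem.

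\textbf{Step 1: extract fingerprints.} Fix $(A,B)$ with $\card{A}=\card{B}=k$ and $\card{A+B}\le Ck$. Apply \Cref{stmt:bltMedium} with a small $\eps=\eps(\gamma)$ and a large $\sigma=\sigma(\gamma,C)$, say $\sigma > 2C$. This gives $A'\subset A^*\subset A$ and $B'\subset B^*\subset B$ with $\card{A'},\card{B'}\le\beta k^{1/2}$ and
\[
  \card{A'+B'}\ge\min\big\{(1-\eps)\card{A^*+B^*},\,\sigma(1-\eps)k\big\}.
\]
Since $\card{A^*+B^*}\le\card{A+B}\le Ck<\sigma(1-\eps)k$, the minimum is attained by the first term. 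By the Cauchy--Davenport-type lower bound $\card{A^*+B^*}\ge\card{A^*}+\card{B^*}-1$ in $\ZZ$, this yields
\[
  \card{A'+B'}\ge(1-\eps)^2\,2k-O(1)\ge(2-3\eps)k .
\]
This is where the asymmetric Freiman lemma (\Cref{stmt:asymFreiman}) could also enter, if one wants the dimension-dependent strengthening; the plain bound $2k$ already suffices for exponent $2+\gamma$.

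\textbf{Step 2: union bound.} If $A+B\subset S$, then $A'+B'\subset S$, and this event has probability $\delta^{\card{A'+B'}}\le\delta^{(2-3\eps)k}$. Since $k=(2+\gamma)\log n/\log(1/\delta)$, we have $\delta^{(2-3\eps)k}=n^{-(2-3\eps)(2+\gamma)}$. The total probability is therefore at most
\[
  N\cdot n^{-(2-3\eps)(2+\gamma)},
\]
where $N$ is the number of admissible fingerprint pairs $(A',B')$ arising in Step 1.

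\textbf{Step 3: count fingerprints.} The trivial bound is $N\le n^{2\beta k^{1/2}}$. Since $k=O(\log n/\log(1/\delta))$ and $\delta\ge n^{-\alpha}$, we have $k\ge(2+\gamma)/\alpha$, and $k\le O(\log n/c)$ because $\delta\le1-c$. Hence $N\le\exp\big(O(\beta(\log n)^{3/2})\big)$, which is far too large compared with $n^{-4}$. This is why Chang's theorem is needed.

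We use the small doubling of $A^*$ and $B^*$. We have $\card{A^*+B^*}\le Ck\le 2C\card{A^*}$, so Plünnecke gives $\card{A^*-A^*}=O_C(k)$. Chang's theorem then places $A^*$ inside a generalized arithmetic progression $P$ of dimension $d=O_C(1)$ and size $\card{P}=O_C(k)$. The ingredients of $P$ (base point and $d$ step sizes in $[-n,n]$, plus $d$ lengths) can be specified in $n^{O_C(1)}$ ways, and similarly for a progression $Q$ containing $B^*$. Given $P$ and $Q$, the number of choices of $A'\subset P$ and $B'\subset Q$ is
\[
  \binom{O(k)}{\le\beta k^{1/2}}^2 = \exp\big(O(\beta k^{1/2}\log k)\big) = n^{o(1)},
\]
because $k=O(\log n)$. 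So $N\le n^{O_C(1)}$.

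\textbf{Main obstacle.} The $n^{O_C(1)}$ factor from specifying the progressions is not absorbed by $n^{-(2-3\eps)(2+\gamma)}$ when $C$ is large, so the naive combination fails. The fix we would try first is to union-bound instead over the pair $(P',Q')$ of the \emph{whole} structure used in the event: condition on the progression $P+Q$, which has size $O_C(k)$ and $n^{O_C(1)}$ descriptions. Alternatively, following \cite{campos2024on}, we would record only $d+1$ elements of $A'$ that generate the progression, which cost $n^{d+1}$, and pay the rest in $\binom{O(k)}{\beta\sqrt k}$. The key saving would then be to choose $\eps$ so that the exponent gained, $n^{-(2+\gamma)(2-3\eps)}$ against the $n^{2}$ needed for an honest count of two translates (base points), leaves $n^{-\gamma/64}$.

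The remaining factor $2k$ in the statement suggests an additional union over a parameter with at most $2k$ values, for instance $\card{A'+B'}$ or the dimension. We would sum the bound over that parameter to reach $2k\,n^{-\gamma/64}$. Making the count of step-size data cost only $n^{2+o(1)}$, rather than $n^{O(d)}$, is the step we expect to be hardest. For it, we would exploit that the steps of $P$ can be taken as differences of elements of $A'$, which are already paid for within the $\binom{\cdot}{\cdot}$ count once two elements of $A'$ and of $B'$ are fixed.
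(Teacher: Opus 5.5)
Your proposal does not close, and the obstacle you isolate at the end is created in Step~1. There you replace $\card{A^*+B^*}$ by the Cauchy--Davenport bound $\approx 2k$. This gives a gain $\delta^{\card{A'+B'}}\le n^{-(2-O(\eps))(2+\gamma)}$ that does not depend on the structure of $A^*\cup B^*$. Locating a $d$-dimensional progression, however, costs $n^{d+1}$ (a base point and $d$ differences), and $d$ grows with $C$. So your union bound only survives while $d+1<(2-O(\eps))(2+\gamma)$.

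None of the proposed remedies change this:
\begin{enumerate}
\item Conditioning on $P+Q$ still costs $n^{\Theta(d)}$.
\item Taking the steps to be differences of elements of $A'$ is circular, because the $\binom{O(k)}{\beta\sqrt{k}}$ count for $A'$ presupposes that $P$ is already specified.
\item The target of paying only $n^{2+o(1)}$ for the step data is false, not merely hard. The pairs $(A,B)=(P,P)$, with $P\subset[n]$ a proper $d$-GAP of size $k$, satisfy $\card{A+B}\le 2^d k$, so they are admissible once $C\ge 2^d$. They range over $n^{d+1-o(1)}$ progressions. Their fingerprints cannot lie in a lower-dimensional section of $P$, since $\card{A'+B'}$ would then be too small, so they essentially determine $P$.
\item Using separate progressions for $A^*$ and $B^*$ only makes the cost larger.
\end{enumerate}

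The missing idea is to make the gain grow with the dimension. This is exactly the ``dimension-dependent strengthening'' you set aside.

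The paper uses a single progression for $X=A^*\cup B^*$. By Pl\"unnecke, $\card{X+X}\le 3C^2k$, so \Cref{stmt:changInZq} (after moving to $\ZZ/p\ZZ$) gives a proper $d$-GAP $P\supseteq X$ with $\card{P}=O_C(k)$. Crucially, it also gives $d\le r=\dimF(X)<12C^2$.

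Applying \Cref{stmt:asymFreimanUnion} to $A^*,B^*$ gives $\card{A^*+B^*}\ge(1-\eps)rk-\binom{r+1}{2}$. Hence $\card{A'+B'}\ge(1-3\eps)\,d\,k$ and $\delta^{\card{A'+B'}}\le n^{-(1-3\eps)(2+\gamma)d}$.

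For each $d$, the $n^{d+1+o(1)}$ choices of $(P,A',B')$ are now beaten. With $\eps=\gamma/32$ and every $d\ge 1$, one has $(1-3\eps)(2+\gamma)d-(1+\eps)(d+1)\ge 9\gamma/16>0$, which leaves room to absorb the $n^{o(1)}$ factors. Summing over $d\le 2k$ gives the factor $2k$.
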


Before proving \Cref{stmt:lowerDoubling}, we give some (standard) definitions.
When $X$ and $Y$ are subsets of an Abelian group, we say that a function $\psi : X \to Y$ is a
Fre\u{\i}man homomorphism if for every $a_1, a_2, b_1, b_2 \in X$ such that
\begin{equation*}
  a_1 + a_2 = b_1 + b_2,
\end{equation*}
we have
\begin{equation*}
  \psi(a_1) + \psi(a_2) = \psi(b_1) + \psi(b_2).
\end{equation*}
If such a function $\psi$ is a bijection, and $\psi^{-1}$ is also a Fre\u{\i}man homomorphism, then we
say that $\psi$ is a Fre\u{\i}man isomorphism, and that $X$ and $Y$ are Fre\u{\i}man isomorphic.
Finally, we define $\dimF(X)$, the Fre\u{\i}man dimension of $X$, to be the largest $d \in \NN$ for
which there is a subset of $\ZZ^d$ not contained in any proper affine subspace that is Fre\u{\i}man
isomorphic to $X$.

The first result that we need is the following simple consequence of Fre\u{\i}man's
lemma~\cite[Lemma~1.14]{freiman1973foundations} (see also \cite[Lemma~4.3]{bilu1999structure}).

\begin{proposition}[see {\cite[Lemma~21]{green2005counting}} or
  {\cite[Observation~8.2]{campos2024on}}]\label{stmt:boundOnFreimanDim}
  Let $p \in \NN$ be a prime.
  For all $X \subseteq \ZZ/p\ZZ$, if $\card{X + X} \le \kappa \card{X}$, then $\dimF(X) \le 2
  \kappa - 1$.
\end{proposition}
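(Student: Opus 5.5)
The plan is to derive the statement from Fre\u{\i}man's lemma: if $Y \subset \RR^d$ is finite and not contained in any proper affine subspace, then
\[
  \card{Y + Y} \ge (d+1)\card{Y} - \binom{d+1}{2}.
\]
We argue by contradiction. Suppose $d \defined \dimF(X) \ge 2\kappa$. By the definition of Fre\u{\i}man dimension, there is $Y \subset \ZZ^d$, not contained in any proper affine subspace, that is Fre\u{\i}man isomorphic to $X$.

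First, observe that a Fre\u{\i}man isomorphism preserves the size of the sumset. Coincidences $a_1 + a_2 = b_1 + b_2$ are preserved in both directions, so the map $a_1 + a_2 \mapsto \psi(a_1) + \psi(a_2)$ is a well-defined bijection from $X + X$ to $Y + Y$. Hence $\card{Y+Y} = \card{X+X} \le \kappa\card{X} = \kappa \card{Y}$.

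Next, we need a lower bound on $\card{Y}$. Since $Y$ affinely spans $\RR^d$, we have $\card{Y} \ge d+1$. Fre\u{\i}man's lemma then gives
\[
  \kappa\card{Y} \ge (d+1)\card{Y} - \frac{d(d+1)}{2},
\]
which rearranges to $(d+1-\kappa)\card{Y} \le d(d+1)/2$. Since $d \ge 2\kappa$, we have $d + 1 - \kappa \ge d/2 + 1$. Combining this with $\card{Y} \ge d+1$ gives
\[
  (d/2+1)(d+1) \le d(d+1)/2,
\]
which says $d+1 \le 0$, a contradiction. Therefore $d \le 2\kappa - 1$.

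The main point needing care is the setting $\ZZ/p\ZZ$. Here the Fre\u{\i}man dimension is defined through isomorphic copies in $\ZZ^d$. If no such copy exists for any $d$ (for instance when $X$ is a large set in $\ZZ/p\ZZ$ with wraparound), then either $\dimF(X)$ is interpreted as the maximum over an empty set, or the bound is vacuous. In every case where some copy $Y$ exists, the argument above applies verbatim, and the case $d = 0$ is trivial. So I expect no real obstacle beyond quoting Fre\u{\i}man's lemma correctly and checking that isomorphisms preserve $\card{X+X}$.
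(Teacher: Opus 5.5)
This is the same route the paper intends. The paper gives no proof of its own: it presents the proposition as a simple consequence of Fre\u{\i}man's lemma and defers to Green and to Campos et al. Your ingredients are all correct: a Fre\u{\i}man isomorphism preserves $\card{X+X}$, $\card{Y} \ge d+1$, and Fre\u{\i}man's lemma applies to $Y$.

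There is one logical slip to fix. The negation of $d \le 2\kappa - 1$ is $d > 2\kappa - 1$, not $d \ge 2\kappa$. Here $\kappa$ is a real parameter; the paper applies the proposition with $\kappa = \card{X+X}/\card{X}$ in \eqref{eq:boundOnFreimanDimOfUnion}. So as written your contradiction only yields $d < 2\kappa$, which is weaker than the claim when $2\kappa \notin \ZZ$. For example, with $\kappa = 1.3$ it still permits $d = 2 > 2\kappa - 1$.

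The repair costs nothing, because your computation has a lot of slack (you reach $d+1 \le 0$). Use $\card{Y} \ge d+1$ to bound $\binom{d+1}{2} \le \frac{d}{2}\card{Y}$. Then Fre\u{\i}man's lemma gives directly $\kappa \card{Y} \ge \card{Y+Y} \ge \big(\frac{d}{2} + 1\big)\card{Y}$, that is, $d \le 2\kappa - 2$, which is stronger than what is stated.
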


We also require an appropriate version of \citeauthor{chang2002polynomial}'s
\namecref{stmt:changInZq} (\Cref{stmt:changInZq} below) to show that $X = A \cup B$ is contained
in a small generalized arithmetic progression (GAP), i.e.\ a set $P$ of the form
\[
  P = \bigg\{a_0 + \sum_{i = 1}^d (w_i - 1) a_i :
  (w_1, \ldots, w_d) \in [\ell_1] \times \cdots \times [\ell_d] \bigg\},
\]
where $d = \dim(P)$ is called the dimension of the GAP, the integers $a_1, \ldots, a_d$ are
called the differences, and $\ell_1, \ldots, \ell_d$ are called the side-lengths of the GAP.
A GAP of dimension $d$ is referred to as a $d$-GAP.
When every choice of $(w_1, \ldots, w_d)$ results in a different element of $P$, we further say
that it is proper.

In order to count $d$-GAPs efficiently, we will change the setting from $[n] \subset \ZZ$ to
$\ZZ/p\ZZ$, where $2n \le p \le 4n$ is a prime.
The appropriate version of \citeauthor{chang2002polynomial}'s theorem for $\ZZ/p\ZZ$ is then a
corollary of a result of \citet{cwalina2013linear}, a quantitative improvement of the celebrated
\citeauthordash{green2007freiman} theorem~\cite{green2007freiman}.
We also remark that if $t = \card{X}$ is allowed to be sufficiently large, then
\citet[Lemma~4.1]{nenadov2025+remark} improved the dependencies of the parameters of
\Cref{stmt:changInZq} on $\kappa$.

\begin{theorem}[{\cite[Corollary 8.4]{campos2024on}}]\label{stmt:changInZq}
  There exists $C' > 0$ such that the following holds.
  Let $p \in \NN$ be a prime, and let $\kappa \ge 2$.
  If $X \subseteq \ZZ/p\ZZ$ satisfies
  \begin{equation}\label{eq:requirementsForChangInZq}
    \card{X + X} \le \kappa \card{X} \qquad \text{ and } \qquad C' \kappa^3 (\log \kappa)^2 \leq \card{X}
    \leq \exp(-C' \kappa^4 (\log \kappa)^2) p,
  \end{equation}
  then there is $P \subseteq \ZZ/p\ZZ$, a proper\footnote{The $d$-GAP $P_d \subset \ZZ^d$ obtained
    in \cite{campos2024on} is not necessarily proper, so we apply
    \cite[Theorem~2.1]{green2005notes} to ensure that both it and $P$, the resulting $d$-GAP, are
    proper, increasing their sizes by a factor of at most $4^d d^{6 d^2} = 2^{o(\kappa^4)}$, a
  bound that follows from \Cref{stmt:boundOnFreimanDim} and $d \le \dimF(X)$.} $d$-GAP, such that
  \begin{equation*}
    X \subseteq P, \qquad \card{P} \le \exp(C' \kappa^4 (\log \kappa)^2 ) \card{X}, \qquad \text{ and }
    \qquad d = \dim(P) \le \dimF(X).
  \end{equation*}
\end{theorem}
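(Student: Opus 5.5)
We plan to obtain the statement from the quantitative Fre\u{\i}man theorem of \citet{cwalina2013linear} in the cyclic group $\ZZ/p\ZZ$. We then reduce the dimension of the resulting progression to $\dimF(X)$, and finally make it proper using \cite[Theorem~2.1]{green2005notes}.

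\textbf{Step 1: a coset progression containing $X$.} We apply \cite{cwalina2013linear} to $X$, which has doubling at most $\kappa$. This should give a coset progression $P_0 + H$ containing $X$ with
\[
  \card{P_0 + H} \le \exp\big(O(\kappa^4(\log\kappa)^2)\big)\card{X}
\]
and $\dim P_0 = O(\kappa^{O(1)})$. The route is the usual one: a large Bohr set, hence a GAP, inside $2X - 2X$, followed by Ruzsa's covering lemma, and finally absorbing the covering translates as extra dimensions of length~$2$. Since $p$ is prime, $H$ is either trivial or all of $\ZZ/p\ZZ$. The upper bound $\card{X} \le \exp(-C'\kappa^4(\log\kappa)^2)\,p$ in \eqref{eq:requirementsForChangInZq}, with $C'$ large, makes $\card{P_0 + H} < p$, so $H = \{0\}$ and $X \subseteq P_0$. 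The lower bound $\card{X} \ge C'\kappa^3(\log\kappa)^2$ is used inside the Chang/Bohr-set step, so that the density-increment and covering losses depend only on $\kappa$ and are absorbed into $\exp(C'\kappa^4(\log\kappa)^2)$.

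\textbf{Step 2: lift and cut the dimension down to $\dimF(X)$.} We write $P_0$ as the image of a box $Q \subset \ZZ^{d_0}$ under $\phi(w) = a_0 + \sum_i (w_i - 1)a_i$. Because $\card{P_0}$ is much smaller than $p$, after passing to a proper sub-progression (again via \cite{green2005notes}) we may assume $\phi$ restricted to $Q$ is a Fre\u{\i}man isomorphism onto its image. Then $\tilde X = \phi^{-1}(X) \cap Q$ is Fre\u{\i}man isomorphic to $X$. If $\tilde X$ spans an affine subspace of dimension $d < d_0$, we intersect $Q$ with that affine lattice. The intersection is contained in a $d$-dimensional GAP of comparable size: this is the standard dimension-reduction argument \cite[Lemma~4.3]{bilu1999structure}, with the polynomial-in-$d_0$ losses bounded by $2^{o(\kappa^4)}$. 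By the definition of Fre\u{\i}man dimension, the resulting $d$ satisfies $d \le \dimF(X)$.

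\textbf{Step 3: properness.} We apply \cite[Theorem~2.1]{green2005notes} to the $d$-GAP in $\ZZ^d$ and push it forward to $\ZZ/p\ZZ$. This gives a proper $d$-GAP $P \supseteq X$ at a cost of a factor $4^d d^{6d^2}$. By \Cref{stmt:boundOnFreimanDim}, $d \le 2\kappa - 1$, so this factor is $2^{o(\kappa^4)}$. It is therefore absorbed by enlarging $C'$, giving $\card{P} \le \exp(C'\kappa^4(\log\kappa)^2)\card{X}$.

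\textbf{Main obstacle.} We expect the hardest part to be Step~2. There we must keep the Fre\u{\i}man isomorphism between $X$ and its lift intact, so that no wrap-around occurs modulo $p$; this is exactly where the upper bound on $\card{X}/p$ is used. At the same time, the size loss in the dimension reduction must stay within $\exp(O(\kappa^4(\log\kappa)^2))$.
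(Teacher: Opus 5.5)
Your Step~3 is the paper's footnote argument. The paper takes everything before it from \cite[Corollary~8.4]{campos2024on}, which already supplies a $d$-GAP $P_d\subset\ZZ^d$ with $d\le\dimF(X)$. So \cite[Theorem~2.1]{green2005notes} is only ever applied in the torsion-free group $\ZZ^d$, at dimension $d\le 2\kappa-1$. The gap is in your reconstruction of that corollary, specifically the lift in Step~2.

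The claim that, because $\card{P_0}$ is much smaller than $p$, you may pass to a proper sub-progression and assume $\phi|_Q$ is a Fre\u{\i}man isomorphism does not hold up, for four reasons.
\begin{enumerate}
  \item Smallness relative to $p$ only rules out $H=\ZZ/p\ZZ$, as you correctly use in Step~1. It says nothing about collisions inside the box: taking $a_2=2a_1$ makes $P_0$ non-proper however small it is.
  \item A sub-progression no longer contains $X$. You need a proper progression \emph{containing} $P_0$.
  \item \cite[Theorem~2.1]{green2005notes} is a statement about progressions in torsion-free groups. That is why the paper applies it to $P_d\subset\ZZ^d$ and not in $\ZZ/p\ZZ$.
  \item Properness only makes $\phi|_Q$ injective. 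For $\phi^{-1}$ to be a Fre\u{\i}man homomorphism on $\phi^{-1}(X)$, you need $\phi$ injective on $Q+Q$, i.e.\ $2$-properness.
\end{enumerate}
Without a genuine Fre\u{\i}man isomorphism between $X$ and $\tilde X$, the conclusion $d\le\dimF(X)$ is unsupported. With it goes the bound $4^d d^{6d^2}=2^{o(\kappa^4)}$ in Step~3, which rests on $d\le\dimF(X)\le 2\kappa-1$.

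The natural repair is to make $P_0$ $2$-proper inside $\ZZ/p\ZZ$, using a properness result for coset progressions in arbitrary abelian groups. The upper bound on $\card{X}/p$ then forces the subgroup to be trivial. However, this step is paid for at the \emph{original} dimension $d_0=\dim P_0$. With losses of the type $4^{d_0}d_0^{6d_0^2}$ used in the footnote, it fits inside $\exp\big(O(\kappa^4(\log\kappa)^2)\big)$ only when $d_0=O\big(\kappa^2\sqrt{\log\kappa}\big)$.

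Your bound $\dim P_0=O(\kappa^{O(1)})$ does not guarantee this. A dimension of order $\kappa^4\log\kappa$, as in Green--Ruzsa, would cost $\exp(\kappa^{8+o(1)})$. So the argument genuinely needs the linear dimension bound that is the point of \cite{cwalina2013linear}, used at this stage, before any dimension reduction. Your Step~1 only invokes it loosely as ``$O(\kappa^{O(1)})$'', and your accounting in Step~2 wrongly attributes the losses to ``polynomial-in-$d_0$'' factors.
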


To obtain a small $d$-GAP for $X = A \cup B$, we will rely on the
\citeauthor{plunnecke1970zahlentheoretische}--\citeauthor{ruzsa1989application}
inequality~\cite{plunnecke1970zahlentheoretische,ruzsa1989application} (see
also \cite[Theorem~7.3.3]{zhao2023graph}) to bound $\card{X + X}$.

\begin{theorem}[\citeauthor{plunnecke1970zahlentheoretische}--\citeauthor{ruzsa1989application} inequality]
  \label{thm:plunnecke}
  Let $\Gamma$ be an Abelian group.
  For all integers $r, s \ge 0$ and finite $A, B \subseteq \Gamma$, if $\card{A + B} \leq \sigma
  \card{A}$, then
  \begin{equation*}
    \card{rB \pm sB} \leq \sigma^{r+s}\card{A}.
  \end{equation*}
\end{theorem}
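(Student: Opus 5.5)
We prove \Cref{thm:plunnecke} by Petridis' argument, with a Ruzsa triangle inequality to handle the mixed sign. Assume $A$ and $B$ are nonempty; the inequality is trivial otherwise.

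\textbf{Step 1: choosing $X$.} Among the nonempty subsets $X \subseteq A$, choose one minimising the ratio $\card{X + B}/\card{X}$, and call the minimum $K$. Taking $X = A$ shows $K \le \card{A+B}/\card{A} \le \sigma$. Also $\card{X} \le \card{A}$.

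\textbf{Step 2: Petridis' lemma.} We show that for every finite $C \subseteq \Gamma$,
\begin{equation*}
  \card{X + B + C} \le K \card{X + C}.
\end{equation*}
The proof is by induction on $\card{C}$. The case $\card{C} = 1$ is the identity $\card{X+B} = K\card{X}$. For the inductive step, write $C' = C \cup \set{c}$ with $c \notin C$, and set
\begin{equation*}
  Z = \set{x \in X \st x + B + c \subseteq X + B + C} .
\end{equation*}
Then $X + B + C' = (X + B + C) \cup \big((X \setminus Z) + B + c\big)$. Moreover $Z + B \subseteq X + B$, and the minimality of $K$ gives $\card{Z + B} \ge K\card{Z}$ when $Z$ is nonempty. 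Hence
\begin{equation*}
  \card{(X\setminus Z) + B + c} \le \card{X + B} - \card{Z + B} \le K(\card{X} - \card{Z}).
\end{equation*}
On the other side, $X + C' = (X + C) \cup \big((X \setminus W) + c\big)$, where $W = \set{x \in X \st x + c \in X + C}$, and this union is disjoint. Since $W \subseteq Z$, we get
\begin{equation*}
  \card{X + C'} = \card{X + C} + \card{X} - \card{W} \ge \card{X + C} + \card{X} - \card{Z}.
\end{equation*}
Combining these with the induction hypothesis $\card{X + B + C} \le K\card{X+C}$ gives
\begin{equation*}
  \card{X + B + C'} \le K\card{X + C} + K(\card{X} - \card{Z}) \le K\card{X + C'},
\end{equation*}
which closes the induction. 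This bookkeeping is the main obstacle. The key subtlety is to compare with the set $Z$ rather than $W$, so that minimality can be applied to a subset of $X$.

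\textbf{Step 3: iteration.} Applying Step 2 with $C = (j-1)B$ for $j = 1, 2, \ldots$ gives $\card{X + jB} \le K^j \card{X}$ for all $j \ge 0$. For the case $rB + sB = (r+s)B$ this already suffices:
\begin{equation*}
  \card{(r+s)B} \le \card{X + (r+s)B} \le K^{r+s}\card{X} \le \sigma^{r+s}\card{A}.
\end{equation*}

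\textbf{Step 4: the difference case.} For $rB - sB$ we use the Ruzsa triangle inequality
\begin{equation*}
  \card{X}\,\card{Y - Z} \le \card{X + Y}\,\card{X + Z}.
\end{equation*}
To prove it, fix for each $d \in Y - Z$ one representation $d = y_d - z_d$ with $y_d \in Y$ and $z_d \in Z$. Then the map $(x, d) \mapsto (x + y_d, x + z_d)$ is injective: the difference of the two coordinates recovers $d$, and then $x$ is determined. Taking $Y = rB$ and $Z = sB$ and using Step 3, we get
\begin{equation*}
  \card{X}\,\card{rB - sB} \le \card{X + rB}\,\card{X + sB} \le K^{r+s}\card{X}^2.
\end{equation*}
Dividing by $\card{X}$ gives $\card{rB - sB} \le K^{r+s}\card{X} \le \sigma^{r+s}\card{A}$.

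\textbf{Degenerate case.} If $r = s = 0$, then $0B = \set{0}$, so the bound reads $1 \le \card{A}$, which holds because $A$ is nonempty.
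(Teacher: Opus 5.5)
The paper does not prove this inequality; it only cites Plünnecke, Ruzsa and Zhao's book. Your route is the standard one found there: Petridis' lemma for a ratio-minimising $X \subseteq A$, then iteration, then the sum form $\card{X}\card{Y-Z}\le\card{X+Y}\card{X+Z}$ of Ruzsa's triangle inequality. Steps 1, 3 and 4 are correct, but the inductive step in Step 2 rests on a false inequality.

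You claim $\card{(X\setminus Z)+B+c} \le \card{X+B}-\card{Z+B}$. This fails in general, because sums coming from $X\setminus Z$ can coincide with sums coming from $Z$. Here is a concrete instance inside your own setup.
\begin{itemize}
\item Take $A=B=\{0,1\}\subset\ZZ$. The minimiser is $X=A$ with $K=3/2$.
\item With $C=\{0\}$ and $c=1$ you get $X+B+C=\{0,1,2\}$ and $Z=\{0\}$.
\item Then $(X\setminus Z)+B+c=\{2,3\}$ has $2$ elements.
\item But $\card{X+B}-\card{Z+B}=1$, and even $K(\card{X}-\card{Z})=3/2$.
\end{itemize}
So your chain of inequalities breaks. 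The target bound $\card{X+B+C'}\le K\card{X+C'}$ still happens to hold here ($4\le 9/2$), but your argument does not establish it.

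The fix is to remove all of $Z+B+c$ from $X+B+c$, rather than removing $Z$ from $X$.
\begin{itemize}
\item Since $Z+B+c\subseteq X+B+C$, you have $X+B+C'=(X+B+C)\cup\big((X+B+c)\setminus(Z+B+c)\big)$.
\item Since $Z+B+c\subseteq X+B+c$, the second set has exactly $\card{X+B}-\card{Z+B}\le K(\card{X}-\card{Z})$ elements.
\end{itemize}
In the example this set is $\{3\}$. With this replacement, your comparison with $\card{X+C'}\ge\card{X+C}+\card{X}-\card{Z}$ via $W\subseteq Z$ goes through unchanged, and the rest of the proof stands.

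A minor point: the case $A=\emptyset$ is not trivial. The statement is false there (for instance $r=s=0$ gives $1\le 0$), so nonemptiness of $A$ is an implicit hypothesis rather than something you can reduce to.
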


The final result that we need is the following asymmetric version of Fre\u{\i}man's
lemma~\cite[Lemma~1.14]{freiman1973foundations} due to \citet{ruzsa1994sum}.
We define $\dimA(X)$ for a set $X \subset \RR^d$ to be the dimension
of the smallest affine subspace that contains $X$.

\begin{theorem}[{\cite[Theorem~1, Corollary~1.1]{ruzsa1994sum}}]
  \label{stmt:asymFreiman}
  For finite sets $A, B \subset \RR^d$, if $\card{B} \le \card{A}$ and ${\dimA(A + B) = d}$, then
  \begin{equation*}
    \card{A + B} \geq \card{A} + \sum_{t = 1}^{\card{B} - 1} \min\big\{d, \card{A} - t\big\}.
  \end{equation*}
  In particular,
  \begin{equation*}
    \card{A + B} \geq \card{A} + d \card{B} - \binom{d + 1}{2}.
  \end{equation*}
\end{theorem}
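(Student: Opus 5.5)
The plan is to prove the main inequality by induction on $\card{B}$, removing one extreme point of $B$ at a time and showing that each removal loses at least $\min\{d, \card{A} - t\}$ sums, where $t$ is the size of what remains of $B$. The ``in particular'' then follows by arithmetic. For that step, since $\card{B} - 1 \le \card{A} - 1$, the summand $\min\{d, \card{A} - t\}$ equals $d$ whenever $t \le \card{A} - d$. For the remaining indices $t > \card{A} - d$, the deficit $d - (\card{A} - t)$ takes distinct values in $\{1, \ldots, d-1\}$. Hence the total deficit is at most $\binom{d}{2}$, and the sum is at least $d(\card{B} - 1) - \binom{d}{2}$. This gives $\card{A+B} \ge \card{A} + d\card{B} - d - \binom{d}{2} = \card{A} + d\card{B} - \binom{d+1}{2}$.

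For the main inequality, the base case $\card{B} = 1$ is trivial, since then $\card{A + B} = \card{A}$. For the inductive step, fix a generic linear functional and order $B = \{b_1, \ldots, b_m\}$ so that $b_m$ is a vertex of $\operatorname{conv}(B)$. Put $B' = B \setminus \{b_m\}$, and aim to show
\[
  \card[\big]{(A + b_m) \setminus (A + B')} \ge \min\big\{d', \card{A} - (m-1)\big\},
\]
where $d' = \dimA(A + B)$. Combined with the induction hypothesis applied to $(A, B')$, this gives the claim. The dimension of $A + B'$ may be smaller than $d$, but this is harmless: the summands for smaller $t$ only need the dimension of the sets present at that stage. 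So I would actually prove the refined statement in which the $t$-th summand uses $\dimA(A + \{b_1, \ldots, b_{t+1}\})$. This refined statement implies the stated one, because these dimensions are nondecreasing in $t$, and choosing the ordering carefully should let them reach $d$ as early as possible.

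The key local claim is this. Since $b_m$ is a vertex of $\operatorname{conv}(B)$, a sum $a + b_m$ can lie in $A + B'$ only if $a + b_m = a' + b$ with $b \ne b_m$, that is, $a - a' = b - b_m$. Among the points of $A$ that are extreme in directions in which $b_m$ is extreme (the ``upper boundary'' of $A$ as seen from $b_m$), one looks for $d$ affinely independent points $a$ for which no such $a'$ exists. To find them, I would take a vertex $a^*$ of $\operatorname{conv}(A)$ maximising the same functional, and then walk along the edges of $\operatorname{conv}(A)$ from $a^*$. In full dimension there are at least $d$ edges, which produces $d$ candidate new sums. When $A$ is small, so that $\operatorname{conv}(A)$ has few points, there are only $\card{A} - t$ candidates. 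The source of this bound is the counting fact that each $b \in B'$ can ``kill'' at most one point along each edge direction. This accounting produces exactly the $\min\{d, \card{A} - t\}$ term.

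The main obstacle is this last step. One must make rigorous that the $\card{B'} = t$ translates $A + b$ with $b \in B'$ can cover at most $\max\{0, t - (\card{A} - d)\}$-many of the $d$ edge-neighbour candidates, and one must handle degenerate configurations in which $\operatorname{conv}(A)$ has lower dimension than $A + B$. In the degenerate case the missing directions come from $B$, so I would add the edges of $\operatorname{conv}(B)$ at $b_m$ to the candidate list. If the direct geometric count proves too fiddly, the fallback is Ruzsa's original strategy: a double induction on $\card{A} + \card{B}$ and on $d$, which compresses to a supporting hyperplane and applies the lower-dimensional case to the two layers.
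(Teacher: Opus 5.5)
The paper gives no proof of this statement; it quotes it from Ruzsa, so your argument has to stand on its own. Your derivation of the ``in particular'' bound from the main inequality is correct.

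The main inequality, however, cannot be reached by your induction, because the key local claim is false. It fails even when $A$ is full-dimensional and no dimension is lost. Take $d = 2$, $A = \{0,1,2\} \times \{0,1\}$ and $B = \{0,1\} \times \{0,1\}$. Then $A + B = \{0,1,2,3\} \times \{0,1,2\}$ has $12 = 6 + 2 + 2 + 2$ elements, so the theorem is tight here.

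Every $b \in B$ is a vertex of $\operatorname{conv}(B)$, and $\dimA\big(A + (B \setminus \{b\})\big) = 2$. Yet $\card{(A + b) \setminus (A + (B \setminus \{b\}))} = 1$ for every $b$; for $b = (1,1)$ the only new sum is $(3,2)$. Your step requires $\min\{2, \card{A} - 3\} = 2$. So no choice of functional or of $b_m$ makes the step work.

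The edge-walking count is exactly what breaks. With $a^* = (2,1)$ and $b_m = (1,1)$, the edge-neighbour sums $(3,1)$ and $(2,2)$ are already $(2,1) + (1,0)$ and $(2,1) + (0,1)$, although $t = 3 < \card{A} - d = 4$. A single translate by another element of $B$ can kill a candidate however large $A$ is. This is where the asymmetric setting differs from the midpoint argument in Freiman's lemma.

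In this example the theorem survives only because $\card{A + B'} = 11$ exceeds the inductive bound $10$ for $(A, B')$. Peeling $B$ one point at a time with $A$ fixed, using only the inductive bound, cannot recover that slack. A working induction needs more flexibility. Options include peeling from $A$ as well (here removing a corner of $A$ loses exactly the one sum the bound requires), carrying slack forward, or the layer-by-layer induction on dimension that you mention only as a fallback and do not carry out.

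Your handling of dimension is also backwards. Since $d_t = \dimA(A + \{b_1, \ldots, b_{t+1}\}) \le d$, the refined bound $\card{A} + \sum_t \min\{d_t, \card{A} - t\}$ is weaker than the stated one and does not imply it. Moreover, if $\dimA(A) \le d - 2$, then $d_1 < d$ for every ordering.

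The correct observation is different. If $\dimA(A + B') = d - 1$, then $A + b_m$ lies in a parallel translate of $\aff(A + B')$ disjoint from it, so all $\card{A}$ sums are new. This gain is at least the gap between the $d$-dimensional bound for $(A, B)$ and the $(d-1)$-dimensional bound for $(A, B')$, which is at most $\card{A} - 1$. That repairs the dimension-drop case, but not the counterexample above, where no dimension drops.
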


We will use \Cref{stmt:asymFreiman} in the form of the following \namecref{stmt:asymFreimanUnion}.

\begin{corollary}\label{stmt:asymFreimanUnion}
  Let $\Gamma$ be an Abelian group.
  For all $r \ge 1$ and every finite, non-empty $A, B \subset \Gamma$ with $\card{B} \le
  \card{A}$ and ${\dimF(A \cup B) = r}$, we have
  \begin{equation}\label{eq:asymFreimanUnion}
    \card{A + B} \geq \card{A} + \sum_{t = 1}^{\card{B} - 1} \min\big\{r - 1, \card{A} - t\big\}
  \end{equation}
  In particular,
  \begin{equation}\label{eq:freimanLikeBound}
    \card{A + B} \geq \card{A} + (r - 1) \card{B} - \binom{r + 1}{2}.
  \end{equation}
\end{corollary}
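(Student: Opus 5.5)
We deduce \Cref{stmt:asymFreimanUnion} from \Cref{stmt:asymFreiman} by transporting $A \cup B$ into a Euclidean space where it is full-dimensional.

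Since $\dimF(A \cup B) = r$, by definition there is a Fre\u{\i}man isomorphism $\psi \colon A \cup B \to X \subset \ZZ^r$, where $X$ is not contained in any proper affine subspace of $\RR^r$. Set $A_1 = \psi(A)$ and $B_1 = \psi(B)$. Then $\card{A_1} = \card{A}$ and $\card{B_1} = \card{B}$ because $\psi$ is a bijection.

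The first step is to show $\card{A_1 + B_1} = \card{A + B}$. For $a, a' \in A$ and $b, b' \in B$, we have $a + b = a' + b'$ if and only if $\psi(a) + \psi(b) = \psi(a') + \psi(b')$. This holds because all four points lie in $A \cup B$, and both $\psi$ and $\psi^{-1}$ are Fre\u{\i}man homomorphisms of order $2$ on $A \cup B$. Consequently the map $a + b \mapsto \psi(a) + \psi(b)$ is a well-defined bijection from $A + B$ onto $A_1 + B_1$.

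The second step, which is the main point, is to bound $\dimA(A_1 + B_1)$ from below. We aim for
\[
  \dimA(A_1 + B_1) \ge r - 1.
\]
Fix $b_0 \in B_1$ and $a_0 \in A_1$. Then $A_1 + B_1 \supseteq (A_1 + b_0) \cup (a_0 + B_1)$. Let $V$ be the linear span of $(A_1 + B_1) - (a_0 + b_0)$. This $V$ contains both $A_1 - a_0$ and $B_1 - b_0$. The affine hull of $X = A_1 \cup B_1$ is then contained in
\[
  a_0 + \Span\big(V \cup \{b_0 - a_0\}\big),
\]
which has dimension at most $\dim V + 1$. Since $X$ is full-dimensional, this gives $\dim V \ge r - 1$, so $d \defined \dimA(A_1 + B_1) \ge r - 1$. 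If $d > r - 1$, that only helps.

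The third step reduces to exactly dimension $d$. Let $W$ be the affine hull of $A_1 + B_1$. Then $A_1 + b_0 \subset W$, so $A_1$ lies in a translate of the direction space of $W$, and likewise for $B_1$. After translating $A_1$ and $B_1$ separately, which changes $A_1 + B_1$ only by a translation, and choosing an affine identification of that space with $\RR^d$, we get sets in $\RR^d$ whose sumset has full affine dimension $d$.

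We then apply \Cref{stmt:asymFreiman}, which is valid since $\card{B_1} \le \card{A_1}$. It gives
\[
  \card{A + B} \ge \card{A} + \sum_{t=1}^{\card{B}-1} \min\{d, \card{A} - t\}.
\]
Using $d \ge r - 1$ and the fact that each summand is monotone in $d$, this yields \eqref{eq:asymFreimanUnion}.

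For \eqref{eq:freimanLikeBound}, we bound $\min\{r-1, \card{A}-t\} \ge (r-1) - \max\{0, r-1-\card{A}+t\}$. The deficits are nonzero only for $t > \card{A} - r + 1$, and since $t \le \card{B} - 1 \le \card{A} - 1$, they sum to at most $1 + 2 + \cdots + (r-2) \le \binom{r+1}{2} - (r-1)$. Hence
\[
  \card{A + B} \ge \card{A} + (r-1)(\card{B}-1) - \Big(\binom{r+1}{2} - (r-1)\Big) = \card{A} + (r-1)\card{B} - \binom{r+1}{2}.
\]
Alternatively, one can apply the ``in particular'' bound of \Cref{stmt:asymFreiman} with $d$ and check that $d\card{B} - \binom{d+1}{2}$ is at least the $r-1$ version. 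That check needs care when $d$ is large relative to $\card{B}$, so the direct summation from \eqref{eq:asymFreimanUnion} is the safer route.

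The main obstacle is the dimension-drop step, namely verifying that $\dimA(A_1 + B_1) \ge r - 1$, together with the bookkeeping needed to move into $\RR^d$ with full affine dimension.
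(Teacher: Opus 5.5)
Your proposal is correct and follows essentially the paper's proof. Like the paper, you transport $A \cup B$ by the Fre\u{\i}man isomorphism, set $d = \dimA(A_1+B_1)$, get $r \le d+1$ from $X_1 \subset a_0 + \Span(V \cup \{b_0 - a_0\})$, and apply \Cref{stmt:asymFreiman} in $\RR^d$. The only divergence is the ``in particular'' bound: you sum the deficits directly, which is valid, while the paper applies the second bound of \Cref{stmt:asymFreiman} together with $d \le r$ (immediate since $A_1+B_1 \subset \RR^r$); that observation removes the worry you raise about $d$ being large.
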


\begin{proof}
  Let $X = A \cup B$, $X_1 \subset \ZZ^r$ and $\psi : X \to X_1$ be the Fre\u{\i}man isomorphism that
  witnesses that $\dimF(X) = r$.
  Namely, $X_1$ is not contained in a proper affine subspace.
  Further let $A_1 = \psi(A)$ and $B_1 = \psi(B)$.
  Since $\psi$ is a Fre\u{\i}man isomorphism, we have $\card{A + B} = \card{A_1 + B_1}$.
  Let $d = \dimA(A_1 + B_1)$, fix $a_* \in A_1$ and $b_* \in B_1$, and let $V$ be the
  $d$-dimensional linear subspace
  of $\RR^r$ such that
  \[
    \aff(A_1 + B_1) = a_* + b_* + V,
  \]
  where we denote by $\aff(W)$ the smallest affine subspace that contains $W$.
  Since
  \[
    (a + b_*) - (a_* + b_*) = a - a_* \in V
    \qquad \text{and} \qquad
    (a_* + b) - (a_* + b_*) = b - b_* \in V
  \]
  for all $a \in A_1$ and $b \in B_1$, we have $A_1 \subset a_* + V$ and $B_1 \subset b_* + V$.

  Now, take a linear isomorphism $f : V \to \RR^d$, and set
  \[
    A_2 = f(A_1 - a_*)
    \qquad \text{and} \qquad
    B_2 = f(B_1 - b_*).
  \]
  Thus, $\card{A_2 + B_2} = \card{A_1 + B_1}$ and $\dimA(A_2 + B_2) = d$, so, by
  \Cref{stmt:asymFreiman}, we have
  \begin{equation*}
    \card{A + B} = \card{A_2 + B_2} \ge \card{A} + \sum_{t = 1}^{\card{B} - 1} \min\big\{d,
    \card{A} - t\big\}
  \end{equation*}
  and
  \begin{equation*}
    \card{A + B} = \card{A_2 + B_2} \ge \card{A} + d \card{B} - \binom{d + 1}{2}.
  \end{equation*}
  Since it is clear that $d \le r$, it suffices to show that $r \le d + 1$.

  To prove that $r \le d + 1$, note that
  \[
    X_1 = A_1 \cup B_1 \subset a_* + \Span\big(V \cup \{b_* - a_*\}\big),
  \]
  which is contained in an affine subspace of $\RR^r$ of dimension at most $d + 1$.
  As $X_1$ is not contained in any proper affine subspace of $\RR^r$, we conclude that $r \le d +
  1$, completing the proof.
\end{proof}

We can now prove \Cref{stmt:lowerDoubling}.

\begin{proof}[Proof of \Cref{stmt:lowerDoubling}]
  Let $p$ be a prime with $2n \le p \le 4n$, and fix a map $\psi : {[n] \to \ZZ/p\ZZ}$ which is a
  Fre\u{\i}man isomorphism between $[n]$ and $\psi([n])$.
  Replacing $A, B, S \subset [n]$ respectively with $\psi(A), \psi(B), \psi(S) \subset \ZZ/p\ZZ$ and
  relabeling, the left-hand side of \eqref{eq:probabilityLowerDoubling} is at most
  \begin{equation}\label{eq:probabilityLowerDoublingInZq}
    \prob[\Bigg]{\exists (A, B) \in \binom{\ZZ/p\ZZ}{k}^2,~ \card{A + B} \le C k : A + B \subset S},
  \end{equation}
  so we will now consider $A$ and $B$ to be $k$-sets in $\ZZ/p\ZZ$.

  Fix $\eps = \gamma / 32$ and apply \Cref{stmt:bltMedium} with $\sigma = 2 C$ and this choice of
  $\eps$ to each pair of $k$-sets $A$ and $B$ with $\card{A + B} \le C k$.
  From this and \eqref{eq:propsOfBltSets}, we obtain $A^* \subset A$ and $B^* \subset B$ with
  \begin{equation}\label{eq:sizesInBlt}
    \card{A^*} \ge (1 - \eps) k \qquad \text{and} \qquad \card{B^*} \ge (1 - \eps) k.
  \end{equation}
  Moreover, from that same application of \Cref{stmt:bltMedium}, there exist $A' \subset A^*$ and
  $B' \subset B^*$ with $\card{A'} \le \beta \sqrt{k}$ and
  $\card{B'} \le \beta \sqrt{k}$ satisfying
  \begin{equation}\label{eq:lowerBoundOnSumset}
    \card{A' + B'} \ge \min\big\{ (1 - \eps) \card{A^* + B^*}, 2 C \card{A^*}, 2 C \card{B^*}\big\}.
  \end{equation}
  Note that we trivially have
  \[
    \card{A' + B'} \le \card{A^* + B^*} \le \card{A + B} \le C k < 2 C \min\big\{\card{A^*},
    \card{B^*}\big\}
  \]
  since $\eps < 1/2$, so \eqref{eq:lowerBoundOnSumset} is always at least
  \begin{equation}\label{eq:lowerBoundOnSumsetSimpler}
    \card{A' + B'} \ge (1 - \eps) \card{A^* + B^*}.
  \end{equation}

  Now, let $X = A^* \cup B^*$, and observe that
  \begin{equation}\label{eq:doublingOfUnion}
    \card{X + X} \le \card{A + A} + \card{A + B} + \card{B + B} \le 2 C^2 k + C k \le 3 C^2 k
  \end{equation}
  where the second to last inequality is due to \Cref{thm:plunnecke}, and the last uses $C > 1$.
  Importantly, it follows from \eqref{eq:doublingOfUnion} and \Cref{stmt:boundOnFreimanDim} that
  \begin{equation}\label{eq:boundOnFreimanDimOfUnion}
    r \defined \dimF(X) \le \frac{2 \card{X + X}}{\card{X}} - 1 \le \frac{6 C^2 k}{(1 -
    \eps)k} - 1 < 12 C^2
  \end{equation}
  By \eqref{eq:freimanLikeBound} in \Cref{stmt:asymFreimanUnion}, applied to $A^*$ and $B^*$, we obtain
  \begin{equation}\label{eq:preFreimanLowerBoundOnSumset}
    \card{A^* + B^*} \ge (1 - \eps) r k - \binom{r + 1}{2}
  \end{equation}
  due to \eqref{eq:sizesInBlt}.
  We claim that combining \eqref{eq:preFreimanLowerBoundOnSumset} with
  \eqref{eq:lowerBoundOnSumsetSimpler} yields
  \begin{equation}\label{eq:freimanLowerBoundOnSumset}
    \card{A' + B'} \ge (1 - \eps)^2 r k - \binom{r + 1}{2} \ge (1 - 3 \eps) r k.
  \end{equation}
  Indeed, taking $\alpha$ sufficiently small, we may assume that $k \ge 12 C^2 / \eps
  \ge r / \eps$ by \eqref{eq:boundOnFreimanDimOfUnion}, which implies the last inequality in
  \eqref{eq:freimanLowerBoundOnSumset}.

  Towards an application of \Cref{stmt:changInZq} to $X$, we verify that choosing $\kappa = 6 C^2$
  satisfies \eqref{eq:requirementsForChangInZq}.
  The first inequality of \eqref{eq:requirementsForChangInZq} is satisfied by \eqref{eq:doublingOfUnion}.
  For the lower bound in the second inequality of \eqref{eq:requirementsForChangInZq}, we also
  take $\alpha$ sufficiently small so that
  \begin{equation*}
    \card{X} \ge (1 - \eps) k \ge \frac{k}{2} > C' \kappa^3 (\log \kappa)^2
  \end{equation*}
  as $\eps \le 1/2$.
  It remains to check the upper bound in the second inequality of \eqref{eq:requirementsForChangInZq}.
  Since $\gamma \le 2$ and $\delta \le 1 - c$, we have
  \begin{equation*}
    \card{X} \le \card{A \cup B} \le 2 k \le \frac{8 \log n}{\log(1 / (1 - c))} < \sqrt{n}
    \le \exp(-C' \kappa^4 (\log \kappa)^2) p
  \end{equation*}
  where the last two inequalities use that $n \le p / 2$ is sufficiently large for $C$ and $c$.
  Applying then \Cref{stmt:changInZq} to $X$ with $\kappa = 6 C^2$
  we obtain a $d$-GAP $P$ such that
  \begin{equation}\label{eq:propertiesOfP}
    X \subseteq P,
    \qquad
    \card{P} \le C'' \card{X},
    \qquad \text{and} \qquad
    d \le \dimF(X),
  \end{equation}
  for a suitable constant $C'' > 0$.
  In particular, combining \eqref{eq:propertiesOfP} with \eqref{eq:freimanLowerBoundOnSumset} yields
  \begin{equation}\label{eq:lowerBoundWrtGAP}
    \card{A' + B'} \ge (1 - 3 \eps) r k \ge (1 - 3 \eps) \dim(P) k
  \end{equation}
  where $P$ is the GAP that contains $A^* \supseteq A'$ and $B^* \supseteq B'$.
  As $A' \subset A$, $B' \subset B$ and $A', B' \subset P$,
  \eqref{eq:probabilityLowerDoublingInZq} is at most
  \begin{equation}\label{eq:probabilityLowerDoublingWithGap}
    \sum_{P \in \mathcal{P}} \sum_{t_1 = 1}^{\beta \sqrt{k}} \sum_{t_2 = 1}^{\beta \sqrt{k}}
    \prob[\Bigg]{\exists (A', B') \in \binom{P}{t_1} \times \binom{P}{t_2},~ A' + B' \text{
    satisfies \eqref{eq:lowerBoundWrtGAP}}: A' + B' \subset S},
  \end{equation}
  where $\mathcal{P}$ is a minimal collection of GAPs in $\ZZ/p\ZZ$ that satisfies
  \eqref{eq:propertiesOfP} for some choice of $X = A^* \cup B^*$.

  Taking a union bound over the choices of $(A', B')$, we conclude that
  \eqref{eq:probabilityLowerDoublingWithGap} is at most
  \begin{equation}\label{eq:probabilityAfterUnionBound}
    \sum_{P \in \mathcal{P}} \beta^2 k \binom{\card{P}}{\beta \sqrt{k}}^2 \max_{A', B'}
    \delta^{\card{A' + B'}} \le
    \sum_{d = 1}^{2k} \sum_{P \in \mathcal{P}_d}
    \beta^2 k \binom{2 C'' k}{\beta \sqrt{k}}^2 \delta^{(1 - 3 \eps) d k}
  \end{equation}
  where $\mathcal{P}_d \defined \{P \in \mathcal{P} : \dim(P) = d\}$, and we used
  \eqref{eq:lowerBoundWrtGAP} and the upper bound on the size of $P \in \mathcal{P}$ in
  \eqref{eq:propertiesOfP}.
  For each $d$, we bound the size of $\mathcal{P}_d$ by
  \begin{equation}\label{eq:sizeOfCalP}
    \card{\mathcal{P}_d} \le p^{d + 1} (2 C'' k)^d
    \le (4 n)^{d + 1} (2 C'' k)^d
  \end{equation}
  by choosing the differences and starting point of $P$, and then counting the number of possible
  side-lengths.
  Substituting \eqref{eq:sizeOfCalP} into \eqref{eq:probabilityAfterUnionBound}, we obtain
  \begin{equation*}
    \sum_{d = 1}^{2k} \sum_{P \in \mathcal{P}_d}
    \beta^2 k \binom{2 C'' k}{\beta \sqrt{k}}^2 \delta^{(1 - 3 \eps) d k} \le
    \sum_{d = 1}^{2 k} (4 n)^{d + 1} \, (2 C'' k)^d \, \beta^2 k \, \binom{2 C'' k}{\beta \sqrt{k}}^2
    \delta^{(1 - 3 \eps) d k},
  \end{equation*}
  which, using that $d \le 2k$ and adequately increasing $C''$, is at most
  \begin{equation}\label{eq:finalCalculations}
    \sum_{d = 1}^{2k} \exp\Big((d + 1) \big(\log n + 2 \log(3 C'' k)\big) + 2 \beta \sqrt{k} \log (2
    C'' k) - (1 - 3\eps) d k \log (1/\delta)\Big).
  \end{equation}
  Furthermore, note that the assumption $k = (2 + \gamma) \log n / \log (1 / \delta)$ implies that
  \begin{equation}\label{eq:implicationOfDefK}
    (1 - 3 \eps) d k \log (1 / \delta) = (1 - 3 \eps)(2 + \gamma) d \log n.
  \end{equation}
  Moreover, for every $d \ge 1$, our choice of $\eps = \gamma / 32$ gives
  \[
    (1 - 3\eps)(2 + \gamma)d - (d + 1) - \eps(d + 1)
    \ge \gamma - (8 + 3\gamma)\eps \ge \frac{9\gamma}{16}.
  \]
  It then follows from \eqref{eq:implicationOfDefK} that
  \begin{equation}\label{eq:calculationsWork}
    (1 - 3\eps) d k \log (1/\delta) - (d + 1)\log n \ge \eps (d + 1) \log n \ge 4 \Big( \big(d +
    1 + \beta \sqrt{k} \big) \log(3 C'' k)\Big)
  \end{equation}
  if $n$ is sufficiently large.
  Substituting \eqref{eq:calculationsWork} in \eqref{eq:finalCalculations} and combining the
  preceding estimates, we obtain that \eqref{eq:probabilityLowerDoublingInZq}, and therefore the
  left-hand side of \eqref{eq:probabilityLowerDoubling}, is at most
  \[
    \prob[\Bigg]{\exists (A, B) \in \binom{[n]}{k}^2,~ \card{A + B} \le C k : A + B \subset S} \le
    \sum_{d = 1}^{2 k} n^{-\eps / 2} \le \frac{2k}{n^{\gamma / 64}},
  \]
  as we wanted to show.
\end{proof}


\section{Counting sumsets of moderate size}\label{sec:midDoubling}

The goal of this section is to generalize the counting results of \citet{green2005counting},
which give precise counts of sets $A \subset [n]$ such that $\card{A} = k$ and $\card{A + A} \le
m$, to counting sets of the form $A + B$ for $A, B \subset [n]$ such that $\card{A} = \card{B} =
k$ and $\card{A + B} \le m$.

\begin{theorem}\label{stmt:countingPairsOfSets}
  For all integers $2 \le k \le m \le n$, the number of sets $Y \subset [2n]$
  that can be written as $Y = A + B$ for $A, B \subset [n]$ satisfying
  \begin{equation}\label{eq:propsOfPairs}
    \card{A} = \card{B} = k \qquad \text{and} \qquad \card{A + B} \le m < k (k + 1) / 2
  \end{equation}
  is at most
  \begin{equation}\label{eq:numberOfPairs}
    n^{2 m / k + 1} \, k^{24 k}.
  \end{equation}
  Moreover, for every $0 < \xi \le 2^{-8}$, if
  \begin{equation}\label{eq:additionalAssumptions}
    k \ge 2^{200} \qquad \text{and} \qquad m \le k^{1 + \xi},
  \end{equation}
  the number of such sets is at most
  \begin{equation}\label{eq:numberOfPairsWithSmallDoubling}
    n^{2 m / k + 1} \, (6 e m / k)^{4 k} \, \exp\big(k^{1 - 2\xi}\big).
  \end{equation}
\end{theorem}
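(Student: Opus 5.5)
We adapt Green's counting argument \cite{green2005counting} to pairs. The plan is to encode each possible sumset $Y = A + B$ by a small amount of data, and then count the encodings.

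\emph{Building a common encoding set.} Fix $A, B$ satisfying \eqref{eq:propsOfPairs}. We construct greedily a short sequence $a_1, \ldots, a_s \in A$ and $b_1, \ldots, b_s \in B$. At each step we choose elements so that the part of $A + B$ already determined grows by a constant factor in ``uncovered'' measure, in the style of Green's lemma. The greedy step selects an element maximising the number of new sums with the previously chosen ones. Averaging shows that after about $s \approx 2m/k$ steps the chosen elements $A_0 = \{a_i\}$ and $B_0 = \{b_j\}$ satisfy the following: all but few elements of $A$ lie in sets of the form $Y - b_j$, and all but few elements of $B$ lie in sets of the form $Y - a_i$. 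The cost of choosing the seeds is $n^{s}$, and a single extra factor $n$ fixes a translate.

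The exponent $2m/k$ comes from the following averaging. Since $\sum_{b \in B}\card{(Y - b) \cap A} = k^2$ and $\card{Y} \le m$, a typical $b$ has $\card{(Y-b) \cap A}$ large relative to $k^2/m$. Each new $b_j$ therefore covers a $\gtrsim k/m$ fraction of what remains, and roughly $m/k$ seeds suffice on each side. This gives $n^{2m/k}$, plus $n$ for the translation.

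\emph{Finishing and counting.} Once the seeds are fixed, we specify $A$ as a subset of $\bigcup_j (Y - b_j)$. We are not allowed to know $Y$, however, so we need care. Our candidate for $A$ is $\bigcap_j (Y - b_j)$ together with a few exceptional elements, and since $Y = A + B$, the set $Y$ is determined once $A$ and $B$ are. To break the circularity we encode $A$ and $B$ themselves. We record the seeds, then for each element of $A \setminus A_0$ we record the index $j$ and its position inside $a_{i} + b_j$-type structures. These positions index elements of $A + B$ relative to seeds, so each costs $O(\log k)$ bits. This yields the crude bound $k^{24k}$: each of the $2k$ elements is specified by a bounded number of indices in $[k^{O(1)}]$. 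The condition $m < k(k+1)/2$ ensures $2m/k + 1 < k + 2$, so the bound is non-trivial and the greedy terminates.

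\emph{The refined bound \eqref{eq:numberOfPairsWithSmallDoubling}.} Under \eqref{eq:additionalAssumptions} we refine the encoding. Most elements of $A$ lie in $Y - b_j$ for many $j$, and the sets $\card{Y - b_j} \le m$ are of size at most $m$. Instead of $k^{O(1)}$ choices per element, we choose $A \cap (Y - b_1)$ as a $k$-subset of an $m$-set ($\binom{m}{k} \le (em/k)^k$), and similarly for $B$. Factors of $(6em/k)^{4k}$ allow two rounds, one for a superset and one for the subset, for each of $A$ and $B$. The exceptional elements, those not captured by the dense structure, number at most $k^{1-2\xi}/\log k$ thanks to $m \le k^{1+\xi}$ and a second-moment/popularity argument. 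Specifying them costs $\exp(k^{1-2\xi})$.

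\emph{Main obstacle.} The main obstacle is the circularity between $Y$ and $(A,B)$: the containers $Y - b_j$ involve the unknown $Y$. We handle it as Green does, by building $A$ and $B$ incrementally, so that at each stage only sums among already-specified elements are used. Concretely, we replace $Y - b_j$ by $(A_{\text{cur}} + B_{\text{cur}}) - b_j$. Making the popularity estimates survive this substitution, with the exceptional set of size $o(k^{1-2\xi})$, is where the constants $2^{200}$ and $2^{-8}$ will be consumed.
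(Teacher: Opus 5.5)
There is a genuine gap: the step that is supposed to produce the exponent $2m/k+1$ is never supplied, and the heuristic you give for it is vacuous. Since $Y=A+B$, you have $A+b\subset Y$ for \emph{every} $b\in B$, so $|(Y-b)\cap A|=k$ identically. The identity $\sum_{b\in B}|(Y-b)\cap A|=k^2$ therefore carries no information, and $b_1$ alone already ``covers'' all of $A$ in your sense. (Even a genuine greedy that removes only a $k/m$ fraction of what remains at each step would need on the order of $(m/k)\log k$ steps to exhaust $A$. That gives $n^{\Theta((m/k)\log k)}$, which is useless in \Cref{stmt:midDoubling}.)

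What has to be shown is that $A$ and $B$ can be recovered from about $2m/k+1$ integers plus $k^{O(k)}$ discrete data, \emph{without} knowing $Y$. Replacing $Y-b_j$ by $(A_{\mathrm{cur}}+B_{\mathrm{cur}})-b_j$ just means pinning a new $a$ by a single relation $a+b_j=a'+b'$ among already-known elements. You give no reason why $2m/k$ seeds let such single relations reach all of $A\cup B$; in general an element is determined only by a whole system of relations. Any element not so determined costs a factor $n$, not $O(\log k)$ bits, so a set of ``few exceptional elements'' would wreck the exponent. The refined part has the same circularity: there is no $m$-element set known in advance from which to pick $A$, since $Y-b_1$ is unknown. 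The bound $k^{1-2\xi}/\log k$ on exceptional elements is not derived from anything.

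The missing idea is the Freiman dimension of $X=A\cup B$, which is what actually counts the free parameters. Let $r=\dim_{\mathrm F}(X)$. Ruzsa's asymmetric Freiman inequality (\Cref{stmt:asymFreimanUnion}) gives $|A+B|\ge k+\sum_{t=1}^{k-1}\min\{r-1,k-t\}$, so $r\ge k$ would force $|A+B|\ge k(k+1)/2$. Hence $r\le k-1$; this, not termination of a greedy, is the role of $m<k(k+1)/2$. Then $m\ge rk-\binom{r+1}{2}\ge rk/2$, i.e.\ $r\le 2m/k$.

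Green's count (\Cref{stmt:countingUnions}) bounds the number of $t$-sets $X\subset[n]$ with $\dim_{\mathrm F}(X)\le r$ by $n^{r+1}t^{4t}$. The factor $n^{r+1}$ pays for $r+1$ points of $X$, which determine all of $X$ once its additive relations (accounted for by the factor $t^{4t}$) are specified. This is the correct form of your ``seeds plus relations'' idea, with the number of seeds controlled by $\dim_{\mathrm F}$ rather than by a covering argument. Summing over $k\le t\le 2k$ and choosing $A,B\subset X$ in $\binom{t}{k}^2$ ways gives \eqref{eq:numberOfPairs}.

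For \eqref{eq:numberOfPairsWithSmallDoubling}, Pl\"unnecke--Ruzsa gives $|A+A|,|B+B|\le m^2/k$, hence $|X+X|\le 3m^2/k$. One then applies the small-doubling form of Green's count with $s=3m^2/k$; no exceptional-set or popularity argument is needed.
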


To prove \Cref{stmt:countingPairsOfSets}, we will rely on \Cref{stmt:asymFreimanUnion}, the
standard \citeauthor{plunnecke1970zahlentheoretische}--\citeauthor{ruzsa1989application}
inequality (\Cref{thm:plunnecke}) and the following integer variant of
\cite[Proposition~6.1]{green2016counting}.
Its proof follows directly from the same argument as in \cite[Proposition~23]{green2005counting},
but in the simpler case of $[n] \subset \ZZ$ rather than in $\ZZ/n\ZZ$, for $n$ being a
sufficiently large prime.

\begin{lemma}[\cite{green2005counting,green2016counting}]\label{stmt:countingUnions}
  Let $n, r, s, t \in \NN$.
  The number of sets $X \subset [n]$ such that $\card{X} = t$ and $\dimF(X) \le r$ is at most
  \begin{equation}\label{eq:numberOfSetsWithGivenDim}
    n^{r + 1} t^{4 t}.
  \end{equation}
  Moreover, if $\card{X + X} \le s \le t^{31 / 30} / 2$, then there are at most
  \begin{equation}\label{eq:numberOfSetsWithGivenDimAndSmallDoubling}
    n^{r + 1} \left(\frac{2 e s}{t}\right)^t \exp(t^{31/32})
  \end{equation}
  such sets $X$.
\end{lemma}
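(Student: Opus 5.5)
The plan is to follow Green's linear-algebra encoding of a set by its additive relations, working directly in $\ZZ$. Write $X = \{x_1 < \cdots < x_t\}$ and view it as a vector $x = (x_1, \ldots, x_t) \in \ZZ^t$. Each additive quadruple $x_a + x_b = x_c + x_d$ holding in $X$ gives a linear equation on $\ZZ^t$. Let $L \subset \QQ^t$ be the solution space of all such equations.

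The first key step is to show that $\dim L = \dimF(X) + 1$. The solutions of these equations are exactly the Freiman homomorphisms $X \to \QQ$, and together they form the universal ambient group of $X$. Evaluating a basis of $L$ at the points of $X$ yields a Freiman-isomorphic copy of $X$ in $\QQ^{\dim L - 1}$ (after removing the translation direction). This copy spans the space affinely, and after clearing denominators it lives in $\ZZ^{\dim L - 1}$, which gives $\dim L - 1 \le \dimF(X)$. Conversely, a full-dimensional copy of $X$ in $\ZZ^d$ supplies $d + 1$ linearly independent homomorphisms, namely the coordinates and the constant, which gives the reverse inequality. This is the step where the definition of $\dimF$ is used, and I would check it carefully, including the affine versus linear bookkeeping.

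Given this identity, $X$ is determined by three pieces of data. First, a set $R$ of at most $t - (d + 1) \le t$ quadruples of indices whose equations span the constraint space; this leaves at most $(t^4)^t$ choices. Second, a set $I$ of $d + 1 \le r + 1$ coordinates that are free for the system $R$; this gives at most $2^t$ choices. Third, the values $x_i \in [n]$ for $i \in I$, giving at most $n^{d+1} \le n^{r+1}$ choices. Indeed, $x$ is then the unique point of $L$ with the prescribed free coordinates. Summing over $d \le r$ and absorbing $2^t$ and the factor $r$ into the crude count of ordered quadruples gives \eqref{eq:numberOfSetsWithGivenDim}. A more economical encoding of $R$ handles this, for instance by recording each relation as the three earlier indices that determine a new coordinate, which costs $t^3$ per relation.

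For \eqref{eq:numberOfSetsWithGivenDimAndSmallDoubling}, the saving must come from encoding the relations through the sumset instead of through arbitrary quadruples. I would label the elements of $X + X$ by $[s]$ in increasing order and record, for each $i$, the label of $x_i + x_{j(i)}$ for a fixed reference pattern, following Green's Proposition 23. The aim is that the relations are recovered from a $t$-subset of roughly $2s$ labels, costing about $\binom{2s}{t} \le (2es/t)^t$. This requires showing that all but $t^{31/32}$ of the coordinates are determined by such sumset labels together with previously determined coordinates. The few exceptional elements, such as those taking part in few additive quadruples, are encoded crudely, at a cost of $t^{O(t^{31/32}/\log t)} \le \exp(t^{31/32})$; here the hypothesis $s \le t^{31/30}/2$ controls the bookkeeping. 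I expect this step to be the main obstacle: one has to show that a sumset-based spanning family of relations exists with only $o(t)$ exceptional elements, and that the encoding is injective once combined with the $n^{r+1}$ free values.
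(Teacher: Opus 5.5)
\paragraph{The first bound.} Your argument here is correct. It is essentially Green's relation-counting argument, which is what the paper relies on: the paper does not reprove this lemma, but imports it from Green's Proposition~23 and Green--Morris's Proposition~6.1, remarking that the integer setting is simpler. The identity $\dim L=\dimF(X)+1$ holds because the vector $x$ itself lies in $L$ (the inclusion $X\hookrightarrow\ZZ$ is a Freiman homomorphism). Hence the map given by a basis of $L$ creates no new additive relations.

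For the absorption, avoid the ``three earlier indices'' encoding, because it is lossy for a fixed order. With the increasing order, suppose $x_M+x_c=x_a+x_b$ and $x_M+x_{c'}=x_{a'}+x_{b'}$ are the only relations. Both pin the same coordinate $x_M$, the six-term relation they imply among smaller elements is never used, and you pay an extra factor $n$. Instead, keep the exact number $t-d-1$ of spanning quadruples. The count is then at most $\binom{t}{d+1}\,t^{4(t-d-1)}\,n^{d+1}\le t^{4t-3(d+1)}n^{d+1}$, and summing over $d\le\min\{r,t-1\}$ gives $n^{r+1}t^{4t}$.

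\paragraph{The second bound.} This is a genuine gap: the bound is not proved, and the mechanism you sketch would not deliver it.

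First, the position of $x_i+x_{j(i)}$ in $X+X$ cannot be decoded. The set $X+X$ is unknown until $X$ is, and sums involving undecoded elements interleave with the known ones.

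Second, consider a decodable variant: in increasing order with reference $x_1$, record how many sums of already decoded elements lie strictly between $x_{i-1}+x_1$ and $x_i+x_1$. These gaps do total at most $s$, giving at most $\binom{s+t}{t}$ choices. But this pins $x_i$ only when $x_i+x_1$ is already a sum of two decoded elements. You give no reason why this holds for all but $t^{31/32}$ indices, and it can fail for every $i\ge 2$: take $X=\{-M\}\cup Y$ with $Y\subseteq[t]$ dense and $M$ huge.

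Third, the exceptional elements are not cheap. An element not pinned by a relation among already decoded elements must be given by its value, at cost $n$ rather than $t^{O(1)}$. In a sequential scheme the number of such elements is not bounded by $\dimF(X)+1$ (as in the example above), so the factor $n^{r+1}$ is not justified either.

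The shape of the bound, $(2es/t)^t\ge\binom{2s}{t}$, indicates what is actually required. You need a structural, container-type input for sets with $\card{X+X}\le s$. It must place all but $O(t^{31/32})$ elements of $X$ (up to Freiman isomorphism) among roughly $2s$ candidates, which can themselves be specified at cost $n^{r+1}e^{O(t^{31/32})}$. This is the content taken from Green--Morris, and it is where the hypothesis $s\le t^{31/30}/2$ enters. Your proposal has no substitute for it, so as written it establishes only the first bound.
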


We are now ready to prove \Cref{stmt:countingPairsOfSets}.

\begin{proof}[Proof of \Cref{stmt:countingPairsOfSets}]
  Observe that, for each $Y \subset [2n]$ such that there exist $A, B \subset [n]$ satisfying
  \eqref{eq:propsOfPairs} and $Y = A + B$, we have that $X = A \cup B$ satisfies $k \le \card{X} \le 2k$.
  Trivially, then, given such $X$, there exist $A, B \subset X$ with $\card{A} =
  \card{B} = k$ such that $A + B = Y$.
  If we moreover assume that \eqref{eq:propsOfPairs} holds, then by the
  \citeauthor{plunnecke1970zahlentheoretische}--\citeauthor{ruzsa1989application} inequality,
  \Cref{thm:plunnecke}, we have
  \begin{equation*}
    \card{A + A} \leq \frac{m^2}{k} \qquad \text{and} \qquad
    \card{B + B} \leq \frac{m^2}{k},
  \end{equation*}
  and therefore $X$ also satisfies
  \begin{equation}\label{eq:doublingOfX}
    \card{X + X} \le \card{A + A} + \card{A + B} + \card{B + B}
    \le \frac{3 m^2}{k}.
  \end{equation}

  To count the sets $Y \subset [2n]$ that can be written as $Y = A + B$ for $A, B \subset [n]$
  satisfying \eqref{eq:propsOfPairs}, we first count the associated $X$ using
  \eqref{eq:numberOfSetsWithGivenDim} in \Cref{stmt:countingUnions}, and then choose $A, B
  \subset X$.
  This gives at most
  \begin{equation}\label{eq:almostNumberOfSetsWithGivenDim}
    \sum_{t = k}^{2k} n^{r + 1} \, t^{4 t} \, \binom{t}{k}^2
    \le 2k \, n^{r + 1} \, (2 k)^{4 (2 k)} \, \binom{2k}{k}^2
    \le n^{r + 1} \, k^{18 k} \, 8^{2 k}
    \le n^{r + 1} \, k^{24 k}
  \end{equation}
  choices for $Y$, where $r \ge \dimF(X)$ is an upper bound on the Fre\u{\i}man dimension of all
  such $X$, and we used $k \ge 2$ in the last inequality.
  Under the assumptions in \eqref{eq:additionalAssumptions}, we apply
  \eqref{eq:numberOfSetsWithGivenDimAndSmallDoubling} in \Cref{stmt:countingUnions} instead of
  \eqref{eq:numberOfSetsWithGivenDim},
  taking $t = \card{X} \ge k$ and $s = 3 m^2 / k$.
  This choice is valid since $s \le 3 k^{1 + 2\xi}$ by \eqref{eq:doublingOfX}, and $k^{1/ 30 - 2\xi}
  \ge 6$ implies $s \le t^{31 / 30} / 2$.
  As a consequence, we obtain
  \begin{equation}\label{eq:almostNumberOfSetsWithGivenDimAndSmallDoubling}
    \sum_{t = k}^{2k} n^{r + 1} \, \left(\frac{2 e s}{t}\right)^t \, \exp(t^{31/32}) \, \binom{t}{k}^2
    \le n^{r + 1} \, \left(\frac{6 e m}{k}\right)^{4 k} \, \exp\big(k^{1 - 2\xi}\big)
  \end{equation}
  where the inequality follows because
  \[
    2k \exp\big((2k)^{31 / 32}\big) \le \exp(k^{1 - 2\xi})
  \]
  due to $\xi \le 2^{-8}$ and $k \ge 2^{200}$, and
  \[
    \left(\frac{2 e s}{k}\right)^{2k} \binom{2k}{k}^2 \le \left(\frac{6 e m}{k}\right)^{4 k}
  \]
  again by $m \ge k$ and our choice of $s = 3 m^2 / k$.

  To complete the proof, note that \eqref{eq:numberOfPairs} and
  \eqref{eq:numberOfPairsWithSmallDoubling} would respectively match
  \eqref{eq:almostNumberOfSetsWithGivenDim} and
  \eqref{eq:almostNumberOfSetsWithGivenDimAndSmallDoubling} if we can bound $\dimF(X) = r \le 2 m / k$.
  Towards that goal, observe that if $r \ge k$, then
  \Cref{stmt:asymFreimanUnion}\eqref{eq:asymFreimanUnion} would imply that
  \[
    \card{A + B} \ge k + \sum_{t = 1}^{k - 1} \min\{k - 1, k - t\} \ge \frac{(k + 1) k}{2},
  \]
  so it follows from the assumption $m < (k + 1) k / 2$ in \eqref{eq:propsOfPairs} that $r \le k - 1$.
  With that assumption, we can apply \Cref{stmt:asymFreimanUnion}\eqref{eq:freimanLikeBound} to
  conclude that
  \[
    m \ge \card{A + B} \ge r k - \binom{r + 1}{2} \ge r \left(k - \frac{(k - 1) + 1}{2}\right)
    \ge \frac{r k}{2},
  \]
  and equivalently that $r \le 2 m / k$, which completes the proof.
\end{proof}

We complete this \namecref{sec:midDoubling} with the following \namecref{stmt:midDoubling} of
\Cref{stmt:countingPairsOfSets}, which we will use in the proof of \Cref{stmt:randomMain}.

\begin{corollary}\label{stmt:midDoubling}
  For every $0 < \gamma \le 2$ and $c > 0$, there exist $C > 1$ and $\alpha > 0$ such that
  the following holds for all sufficiently large $n \in \NN$.
  For all $n^{-\alpha} < \delta \le 1 - c$, if $S \subset [n]$ is a $\delta$-random set, then
  \begin{equation}\label{eq:probabilityMidDoubling}
    \prob[\Bigg]{\exists (A, B) \in \binom{[n]}{k}^2,~ C k < \card{A + B} < \frac{(k + 1)k}{2} : A + B
    \subset S} \le \frac{2 k^2}{n^2},
  \end{equation}
  where $k = (2 + \gamma) \log n / \log (1 / \delta)$.
\end{corollary}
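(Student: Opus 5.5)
The plan is a union bound over the possible sumsets $Y = A + B$, grouped dyadically or simply by the exact value $m = \card{A + B}$, for $Ck < m < k(k+1)/2$. For a fixed $Y$ with $\card{Y} = m$, we have $\prob{Y \subset S} \le \delta^{m}$. By \Cref{stmt:countingPairsOfSets}, the number of possible $Y$ with $\card{Y} \le m$ is at most $n^{2m/k+1} k^{24k}$. Hence the probability in \eqref{eq:probabilityMidDoubling} is at most
\[
  \sum_{m = Ck}^{k(k+1)/2} n^{2m/k + 1} \, k^{24 k} \, \delta^{m}.
\]
Since $k = (2+\gamma)\log n/\log(1/\delta)$, we have $\delta^m = n^{-(2+\gamma)m/k}$. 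Each term is therefore
\[
  \exp\Big( \log n + 24 k \log k - \gamma \tfrac{m}{k}\log n \Big).
\]
The task is to show this is at most $n^{-2}$ per term. Summing over at most $k^2$ values of $m$ then gives the bound $2k^2/n^2$.

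With $m \ge Ck$, the negative part of the exponent is at least $\gamma C \log n$. We need
\[
  \gamma C \log n \ge 3\log n + 24 k\log k,
\]
which is where the choice of $C$ and $\alpha$ enters and which is the main obstacle. The term $24 k \log k$ is not $O(\log n)$ uniformly in $\delta$: when $\delta$ is close to $1-c$, $k$ is of order $\log n$, so $k\log k \approx \log n \log\log n$, which a constant $C$ cannot beat.

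The fix is to split the range of $m$. For large $m$, the decay $\gamma (m/k) \log n$ grows with $m$. Once $m \ge k^{1+\xi}$, say, we get
\[
  \gamma (m/k) \log n \ge \gamma k^{\xi}\log n \gg k \log k,
\]
because $k \le O(\log n)$ (using $\delta \le 1-c$) and $k \ge (2+\gamma)/\alpha$ is large (using $\delta > n^{-\alpha}$). Here the crude count \eqref{eq:numberOfPairs} suffices.

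For $Ck < m \le k^{1+\xi}$, with $\xi = 2^{-8}$, we use instead the refined count \eqref{eq:numberOfPairsWithSmallDoubling}; this requires $k \ge 2^{200}$, which holds by taking $\alpha$ small. The error factor becomes
\[
  (6em/k)^{4k}\exp\big(k^{1-2\xi}\big).
\]
Writing $\lambda = m/k \ge C$, the exponent per term is
\[
  \log n + 4k\log(6e\lambda) + k^{1-2\xi} - \gamma\lambda \log n .
\]
Now use $\log n = k\log(1/\delta)/(2+\gamma) \ge k\log(1/(1-c))/(2+\gamma) =: c_1 k$. Then
\[
  \gamma\lambda\log n \ge \gamma \lambda c_1 k,
\]
which dominates $4k\log(6e\lambda)$ once $\lambda \ge C$ for $C$ large depending on $\gamma$ and $c$, because $\lambda$ beats $\log \lambda$ linearly. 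Half of $\gamma\lambda\log n$ then absorbs $4k\log(6e\lambda) + k^{1-2\xi}$, the latter being $o(k)$. The other half, at least $\tfrac{\gamma C}{2}\log n$, exceeds $3\log n$ for $C \ge 6/\gamma$.

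In the first range, more precisely, the same absorption works. We need
\[
  \tfrac{\gamma}{2}\lambda \log n \ge 24k\log k \quad\text{when } \lambda \ge k^{\xi},
\]
which holds since $\lambda\log n \ge c_1 k^{1+\xi}$ and $k$ is large. Combining both ranges, each term is at most $n^{-2}$. The number of terms is at most $k^2$, which yields the stated bound $2k^2/n^2$, with room to spare.
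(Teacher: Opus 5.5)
Your proposal is correct and is essentially the paper's proof. It uses a union bound over $Y = A + B$, split at $\card{Y} = k^{1+\xi}$ with $\xi = 2^{-8}$. Below the threshold it applies the refined count \eqref{eq:numberOfPairsWithSmallDoubling}, with $C$ large in terms of $\gamma$ and $c$ via $\log n \ge k\log(1/(1-c))/(2+\gamma)$. Above it it applies the crude count \eqref{eq:numberOfPairs}, with $\alpha$ small so that $k$ is large. The only cosmetic difference is that you bound every term by $n^{-2}$ and sum over at most $k^2$ values of $m$, whereas the paper estimates the two sums separately.
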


\begin{proof}
  Fix $\xi = 2^{-8}$ and choose $C = C(\gamma, c)$ sufficiently large so that
  \begin{equation}\label{eq:choiceOfC}
    \gamma C \ge 4, \qquad \frac{48 \log C}{\log(1 / (1 - c))} + 1 \le \frac{\gamma C}{2}
    \qquad \text{and} \qquad
    4 \log(6 e C) + 1 \le 12 \log C.
  \end{equation}
  Taking a union bound over the possible $Y = A + B$, separating in two terms depending on
  whether $\card{Y} \le k^{1 + \xi}$, choosing $\alpha \le 2^{-200}$ so that $k \ge 2^{200}$ and
  applying \Cref{stmt:countingPairsOfSets}, we obtain that the left-hand side of
  \eqref{eq:probabilityMidDoubling} is at most
  \begin{equation}\label{eq:midRangeAfterUnionBound}
    \sum_{m = C k}^{k^{1 + \xi}} n^{2m / k + 1} \, (6 e m / k)^{4 k} \, \exp\big(k^{1 - 2\xi}\big) \,
    \delta^m
    + \sum_{m = k^{1 + \xi}}^{(k + 1)k/2} n^{2m / k + 1} \, k^{24 k} \, \delta^m.
  \end{equation}
  Now, for $m \ge C k$, it follows from $k = (2 + \gamma) \log n / \log (1 / \delta)$,
  \eqref{eq:choiceOfC} and monotonicity that
  \begin{equation}\label{eq:deltaPowerM}
    \delta^m = \exp\hspace{-3pt}\left(\hspace{-3pt}- (2 + \gamma) \frac{m}{k} \log n\hspace{-2pt}\right)
    \quad \text{and} \quad
    \left(\frac{6 e m}{k}\right)^{\hspace{-3pt}4 k} \exp\hspace{-3pt}\big(k^{1 - 2\xi}\big) \le
    \exp\hspace{-3pt}\left(\hspace{-2pt}12 k \log \frac{m}{k}\hspace{-2pt}\right).
  \end{equation}
  Using \eqref{eq:deltaPowerM}, we see that \eqref{eq:midRangeAfterUnionBound} is at most
  \begin{equation}\label{eq:midRangeAfterDeltaSubstitution}
    \begin{aligned}
      \sum_{m = C k}^{k^{1 + \xi}} \exp\left(\!\left(\frac{2m}{k} + 1\!\right)\!\log n + 12 k
        \log \frac{m}{k}
      - (2 + \gamma) \frac{m}{k} \log n\right) \\
      + \sum_{m = k^{1 + \xi}}^{(k + 1)k/2} \exp\left(\!\left(\frac{2 m}{k} + 1\!\right)\!\log n + 24 k
      \log k - (2 + \gamma) \frac{m}{k} \log n\right).
    \end{aligned}
  \end{equation}
  For the first sum in \eqref{eq:midRangeAfterDeltaSubstitution}, since $\delta \le 1 - c$, we have
  \begin{equation}\label{eq:upperBoundOnK}
    k \le \frac{4 \log n}{\log\!\big(1 / (1 - c)\big)}.
  \end{equation}
  It follows from \eqref{eq:choiceOfC} and monotonicity that
  \[
    \gamma \frac{m}{k} \log n - 12 k \log \frac{m}{k} - \log n
    \ge \gamma \frac{m}{2 k} \log n,
  \]
  for every $m \ge C k$.
  Therefore,
  \begin{equation}\label{eq:firstEstimate}
    \sum_{m = C k}^{k^{1 + \xi}} \exp\left(\!\left(\frac{2m}{k} + 1\!\right)\!\log n + 12 k \log \frac{m}{k}
    - (2 + \gamma) \frac{m}{k} \log n\right) \le
    \frac{k^2}{n^{\gamma C / 2}} \le \frac{k^2}{n^2}.
  \end{equation}
  Take $\alpha$ sufficiently small so that $k^\xi \ge 4 / \gamma$.
  The upper bound on $k$, \eqref{eq:upperBoundOnK}, also implies that, for all sufficiently large $n$,
  \[
    24 k \log k \le \frac{\gamma}{4} k^\xi \log n.
  \]
  Since our choice of $\alpha$ also gives $\log n \le (\gamma / 4) k^\xi \log n$, it follows that,
  for $m/k \ge k^\xi$,
  \[
    \gamma \frac{m}{k} \log n - 24 k \log k - \log n
    \ge \frac{\gamma}{2} k^\xi \log n \ge 2 \log n,
  \]
  and thus
  \begin{equation}\label{eq:secondEstimate}
    \sum_{m = k^{1 + \xi}}^{(k + 1)k/2} \exp\left(\!\left(\frac{2 m}{k} + 1\right)\!\log n + 24 k
    \log k - (2 + \gamma) \frac{m}{k} \log n\right) \le \frac{k^2}{n^2}.
  \end{equation}
  Combining \eqref{eq:firstEstimate} and \eqref{eq:secondEstimate} establishes
  \eqref{eq:probabilityMidDoubling}.
\end{proof}

\section{A simple bound effective for very large sumsets}\label{sec:largeDoubling}

The goal of this \namecref{sec:largeDoubling} is to prove the following estimate on the
probability of a $\delta$-random subset of $[n]$ containing a large $A + B$ where $A, B \subset [n]$.

\begin{lemma}\label{stmt:largeDoubling}
  For every $0 < \gamma \le 1$ and $c > 0$, there exists $\alpha > 0$ such that the following
  holds for all sufficiently large $n \in \NN$.
  For all $n^{-\alpha} < \delta \le 1 - c$, if $S \subset [n]$ is a $\delta$-random set, then
  \begin{equation}\label{eq:probabilityLargeDoubling}
    \prob[\Bigg]{\exists (A, B) \in \binom{[n]}{k}^2,~  \card{A + B} \ge \frac{(k + 1)k}{2} : A + B
    \subset S} \le \frac{k^2}{n^2},
  \end{equation}
  where $k = (3 + \gamma) \log n / \log (1 / \delta)$.
\end{lemma}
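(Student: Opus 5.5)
The plan is to use \citeauthor{ruzsa1989application}'s covering argument to compress $A$ into a small number of translates of $B - B$. Then either we find many disjoint translates inside $A + B$, or $A$ is cheap to specify. We take a union bound over a small amount of data determining $A + B$, with $k = (3 + \gamma)\log n / \log(1/\delta)$, so that $\delta^{m} = n^{-(3 + \gamma) m / k}$.

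Fix $A, B \in \binom{[n]}{k}$ with $\card{A + B} \ge k(k+1)/2$. Let $X \subset A$ be maximal such that the translates $x + B$, $x \in X$, are pairwise disjoint. By maximality, for every $a \in A$ there is $x \in X$ with $(a + B) \cap (x + B) \ne \emptyset$. Hence $A \subset X + B - B$; this is the covering lemma. We split according to $\card{X}$, with threshold $k/2$.

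\emph{Case 1: $\card{X} \ge k/2$.} Shrink $X$ to a subset $X'$ with $\card{X'} = t \defined \lceil k/2 \rceil$. The set $X' + B$ is a disjoint union of $t$ translates of $B$, so $\card{X' + B} = tk$. Union bounding over the choices of $B$ and $X'$ bounds the probability of this case by
\[
  n^{k + t}\, \delta^{tk} = \exp\big((k + t - (3 + \gamma) t)\log n\big).
\]
Since $t \ge k/2$, we have $(2 + \gamma) t - k \ge \gamma k/2$. This is at least $2\log n / \log n$-scale large, and gives a bound of $n^{-\gamma k/2} \le n^{-2}$ once $k \ge 4/\gamma$. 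That holds for small $\alpha$, because $k \ge (3 + \gamma)/\alpha$.

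\emph{Case 2: $\card{X} < k/2$.} Now $A \subset X + B - B$, a set of size at most $k^3$. So once $B$ and $X$ are fixed, $A$ is one of at most $\binom{k^3}{k} \le k^{3k}$ sets. We use the full hypothesis $\card{A + B} \ge k(k+1)/2$ and union bound over $(B, X, A)$. Summing over $\card{X} = s < k/2$, the probability is at most
\[
  \sum_{s < k/2} n^{k + s}\, k^{3k}\, \delta^{k(k+1)/2}
  \le k\, \exp\Big(\big(\tfrac{3k}{2} - \tfrac{(3 + \gamma)(k + 1)}{2}\big)\log n + 3k\log k\Big).
\]
The exponent is at most $-\frac{\gamma k}{2}\log n - \frac{3}{2}\log n + 3k\log k$. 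Since $k \le 4\log n / \log(1/(1 - c))$ gives $\log k = O(\log\log n)$, the term $3k\log k$ is $o(k\log n)$. So this case also contributes at most $k^2/(2n^2)$ for $n$ large. Adding the two cases gives \eqref{eq:probabilityLargeDoubling}.

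The main obstacle is choosing the split so that both cases close simultaneously. This is exactly where the constant $3$ arises. Case 1 needs $\card{X} > k/(2 + \gamma)$, while Case 2 needs $k + \card{X} < (3 + \gamma)k/2$. The threshold $k/2$ satisfies both only because $k$ carries the factor $3 + \gamma$.

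The delicate point in Case 2 is making sure that specifying $A$ costs only $k^{O(k)} = n^{o(k)}$ rather than $n^{k}$. This is what the covering $A \subset X + B - B$ buys. I would double-check that no further $n$-dependent factor is hidden in the counting, for example by choosing $X$ as a subset of $A$ rather than of $[n]$; that choice only costs $n^{s}$, which is already accounted for. Finally, I would choose $\alpha$ small enough that $k$ exceeds the constant thresholds used in both cases.
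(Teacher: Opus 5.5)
Your proof is correct. It reaches the same constant $3$ through a different organization of the same covering idea.

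\textbf{The paper's route.} The paper uses Ruzsa's covering lemma as a black box, packaged in \Cref{stmt:trivialCountOfSets} as a bound of $n^{k + m/k}\binom{mk}{k}$ on the number of pairs with $\card{A+B} \le m$. It then sums $n^{k+m/k}\binom{mk}{k}\delta^m$ over $m \ge k(k+1)/2$. The factor $3$ comes from $n^{k+m/k} \le n^{3m/k}$, that is, from $k \le 2m/k$.

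\textbf{Your route.} You open up the covering lemma and split on the size of the maximal disjoint family $X$ rather than on $m$. When $\card{X} \ge k/2$, you discard $A$ entirely and union bound over the sub-sumset $X' + B$ of $\lceil k/2 \rceil$ disjoint translates. This case needs neither the hypothesis $\card{A+B} \ge k(k+1)/2$ nor any count of $A$. When $\card{X} < k/2$, you count $A$ inside $X + B - B$ and use only the fixed bound $\delta^{k(k+1)/2}$, so no summation over $m$ is needed.

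\textbf{Why they agree.} The two arguments exploit the same inequality. The paper's $\card{Z} \le m/k$ is exactly $\card{Z}\,k = \card{Z+B} \le \card{A+B}$. So in the paper, $\delta^m$ with $m \ge \max\{\card{Z}k,\, k(k+1)/2\}$ pays for $n^{\card{Z}}$ automatically, whereas you pick whichever of the two lower bounds is favourable. Write $k = K\log n/\log(1/\delta)$. Your admissible thresholds $\tau$ satisfy $k/(K-1) < \tau < (K/2-1)k$, and such $\tau$ exist if and only if $K > 3$. The paper's requirement $k < (K-1)m/k$ for all $m \ge k^2/2$ is the same condition $K>3$.

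\textbf{Trade-off.} The paper's version is shorter once the corollary is in hand. Yours is self-contained, and Case 1 isolates a clean fact that holds in every sumset regime: a $\delta$-random set contains no $\lceil k/2 \rceil$ disjoint translates of a $k$-set.
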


Its trivial proof is a direct consequence of \citeauthor{ruzsa1999analog}'s covering lemma.

\begin{lemma}[\cite{ruzsa1999analog}, see {\cite[Theorem 7.4.1]{zhao2023graph}}]\label{stmt:ruzsaCovering}
  Let $A$ and $B$ be finite nonempty subsets of an additive group $G$.
  If \(\card{A+B} \le \kappa \card{B},\) then there exists a subset $Z \subseteq A$ such that
  \[
    A \subseteq Z + B - B
  \]
  and \(\card{Z} \le \kappa\).
\end{lemma}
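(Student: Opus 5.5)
We prove the lemma by the standard maximal disjoint family argument: choose $Z \subseteq A$ maximal with respect to inclusion such that the translates $z + B$, for $z \in Z$, are pairwise disjoint. Such a $Z$ exists because $A$ is finite. The empty set qualifies, and we may also start from any single element of $A$ and extend greedily until no further element can be added.

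For the size bound, note that the translates $z + B$ with $z \in Z$ are pairwise disjoint. Each has exactly $\card{B}$ elements, and all are contained in $A + B$ since $Z \subseteq A$. Hence
\[
  \card{Z}\,\card{B} = \card[\Big]{\bigcup_{z \in Z} (z + B)} \le \card{A + B} \le \kappa \card{B},
\]
and dividing by $\card{B} > 0$ (as $B$ is nonempty) gives $\card{Z} \le \kappa$.

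For the covering property, take any $a \in A$. If $a \in Z$, then $a = a + b - b \in Z + B - B$ for any $b \in B$. If $a \notin Z$, then by maximality $Z \cup \{a\}$ fails the disjointness condition. Since the translates indexed by $Z$ are already pairwise disjoint, $a + B$ must meet $z + B$ for some $z \in Z$. So $a + b_1 = z + b_2$ for some $b_1, b_2 \in B$, and therefore $a = z + b_2 - b_1 \in Z + B - B$. Thus $A \subseteq Z + B - B$.

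There is no real obstacle here. The only points needing care are the existence of a maximal $Z$ (finiteness of $A$) and the use of $B \ne \emptyset$ both to divide by $\card{B}$ and to cover the case $a \in Z$. The argument works verbatim in any abelian group $G$.
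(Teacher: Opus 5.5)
Your proof is correct. It is exactly the standard argument of Ruzsa, as presented in Zhao's Theorem~7.4.1: you take $Z \subseteq A$ maximal with the translates $z + B$ pairwise disjoint, bound $\card{Z}\card{B} \le \card{A + B}$, and use maximality to write $a = z + b_2 - b_1$. The paper does not reprove the lemma and simply cites these sources, so your route coincides with the one it relies on.
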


We will use \Cref{stmt:ruzsaCovering} in the form of the following \namecref{stmt:trivialCountOfSets}.

\begin{corollary}\label{stmt:trivialCountOfSets}
  For all $n, k, m \in \NN$, there are at most
  \begin{equation}\label{eq:trivialCountOfSets}
    n^{k + m/k} \, \binom{m k}{k}
  \end{equation}
  sets $A, B \subset [n]$ such that $\card{A} = \card{B} = k$ and ${\card{A + B} \le m}$.
\end{corollary}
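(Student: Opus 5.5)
We count pairs $(A,B)$ by first choosing $B$ and then encoding $A$ through \Cref{stmt:ruzsaCovering}. There are at most $\binom{n}{k} \le n^k$ choices of $B$, which accounts for the factor $n^k$ in \eqref{eq:trivialCountOfSets}.

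Next fix such a $B$ and suppose $\card{A + B} \le m$. Since $\card{B} = k$, we have $\card{A+B} \le (m/k)\card{B}$. Applying \Cref{stmt:ruzsaCovering} with $\kappa = m/k$ gives $Z \subseteq A$ with $\card{Z} \le m/k$ and
\[
  A \subseteq Z + B - B .
\]
The set $Z$ is a subset of $[n]$ of size at most $m/k$. Summing over the possible sizes $j \le m/k$, the number of choices of $Z$ is at most $\sum_{j \le m/k} \binom{n}{j} \le n^{m/k}$ when $n \ge 2$; the cases $n = 1$ or $m < k$ are trivial or vacuous.

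Given $Z$ and $B$, the set $A$ is a $k$-subset of $Z + B - B$. That set has at most $\card{Z}\,\card{B-B} \le (m/k)\,k^2 = mk$ elements, so there are at most $\binom{mk}{k}$ choices of $A$.

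Multiplying the three counts gives at most $n^k \cdot n^{m/k} \cdot \binom{mk}{k}$ pairs, which is \eqref{eq:trivialCountOfSets}. The only bookkeeping point is that $m/k$ may not be an integer. This costs nothing, because $\card{Z} \le \lfloor m/k \rfloor$ and $n^{\lfloor m/k\rfloor} \le n^{m/k}$. I expect no real obstacle: the argument is just a union bound over the covering data $(B, Z)$.
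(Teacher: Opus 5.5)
Your proof is correct and follows the paper's argument: choose $B$, apply \Cref{stmt:ruzsaCovering} with $\kappa = m/k$ to get $Z \subseteq A$, and then pick $A$ among the $k$-subsets of $Z + B - B$, which has at most $mk$ elements. One minor slip: you should count only nonempty $Z$ (this is forced, since $A \neq \emptyset$ and $A \subseteq Z + B - B$), because if $j = 0$ is included and $\lfloor m/k \rfloor = 1$ your sum equals $n + 1 > n$, whereas $\sum_{j=1}^{J} \binom{n}{j} \le n^{J}$ holds for every $n \ge 1$ (the paper avoids this by padding $Z$ to size exactly $m/k$).
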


\begin{proof}
  It follows from \Cref{stmt:ruzsaCovering} that, for fixed $B \subset [n]$ and every $A \subset
  [n]$ such that $\card{A} = \card{B} = k$ and $\card{A + B} \le m$, there exists $Z \subset A$
  with $A \subset Z + B - B$ and $\card{Z} = m / k$, by including more elements in $Z$ if necessary.
  So we choose $B$ with $\binom{n}{k}$ choices, then choose $Z$ with $n^{m/k}$ choices, and
  finally $A \subset Z + B - B$ with
  \[
    \binom{\card{Z + B - B}}{\card{A}} \le \binom{\card{Z} \card{B}^2}{k} \le \binom{m k}{k}
  \]
  choices, all of which combined establish \eqref{eq:trivialCountOfSets}.
\end{proof}

We can now prove \Cref{stmt:largeDoubling}.

\begin{proof}[Proof of \Cref{stmt:largeDoubling}]
  Take a union bound over all sets $A, B \subset [n]$ such that \[
    \card{A} = \card{B} = k
    \qquad \text{and} \qquad
    \card{A + B} \ge \frac{k (k + 1)}{2}
  \]
  in \eqref{eq:probabilityLargeDoubling} to obtain, by
  \eqref{eq:trivialCountOfSets},
  \begin{equation*}
    \prob[\Bigg]{\exists (A, B) \in \binom{[n]}{k}^2,~  \card{A + B} \ge \frac{(k + 1)k}{2} : A + B
    \subset S} \le \sum_{m = m_0}^{k^2} n^{k + m / k} \binom{m k}{k} \delta^m,
  \end{equation*}
  where we set $m_0 = k(k + 1) / 2$.
  Now, since
  \begin{equation*}
    \delta^m = \exp\left(-(3 + \gamma) \frac{m}{k} \log n\right),
  \end{equation*}
  follows from $k = (3 + \gamma) \log n / \log (1 / \delta)$, we obtain
  \begin{equation*}
    \sum_{m = m_0}^{k^2} n^{k + m / k} \binom{m k}{k} \delta^m \le \sum_{m = m_0}^{k^2}
    \exp\left(\frac{3 m}{k} \log n + 3 k \log k - (3 + \gamma)
    \frac{m}{k} \log n\right),
  \end{equation*}
  where we also bounded $k \le 2 m / k$ from the fact that $m \ge (k + 1)k / 2$ in the sum.
  The proof is complete once we note that, for sufficiently large $n$, we have $\gamma > 18 \log
  k / \log n$, since $\gamma$ is fixed and $k \le 4 \log n / \log(1 / (1 - c))$.
  Therefore, we have
  \begin{equation*}
    \sum_{m = m_0}^{k^2} \exp\left(\frac{3 m}{k} \log n + 3 k \log k - (3 +
    \gamma) \frac{m}{k} \log n\right)
    \le \sum_{m = m_0}^{k^2} n^{- \gamma k / 3}
    \le \frac{k^2}{n^2},
  \end{equation*}
  where the last inequality follows from $k \ge 3 / \alpha$ if we choose $\alpha \le \gamma / 2$.
\end{proof}

\section{\texorpdfstring{Proof of \Cref{stmt:randomMain} and \Cref{thm:main}}{Proof of Theorem
1.3 and Theorem 1.1}}\label{sec:proofOfMain}

The proof of \Cref{stmt:randomMain} is a straightforward combination of the previous results.

\begin{proof}[Proof of \Cref{stmt:randomMain}]
  Fix $\gamma' = 1 + \gamma$, and let $C > 1$ and $\alpha_2 > 0$ be given by
  \Cref{stmt:midDoubling} with parameters $\gamma'$ and $c$.
  Let $\alpha_1$ and $\alpha_3$ be given by \Cref{stmt:lowerDoubling} with parameters $\gamma'$,
  $c$ and $C$, and by \Cref{stmt:largeDoubling} with parameters $\gamma$ and $c$, respectively,
  and set $\alpha = \min\{1/2, \alpha_1, \alpha_2, \alpha_3\}$.
  Partition the pairs of sets $A, B \subset [n]$ with $\card{A} = \card{B} = k$ into three
  collections $\mathcal{S}_1$, $\mathcal{S}_2$, and $\mathcal{S}_3$, defined by
  \begin{equation*}
    \begin{aligned}
      \mathcal{S}_1 &= \left\{(A, B) \in \binom{[n]}{k}^2,~ \card{A + B} \le C k \right\}, \\
      \mathcal{S}_2 &= \left\{(A, B) \in \binom{[n]}{k}^2,~ C k < \card{A + B} < \frac{(k + 1)k}{2}
      \right\}, \quad \text{and} \\
      \mathcal{S}_3 &= \left\{(A, B) \in \binom{[n]}{k}^2,~ \frac{(k + 1)k}{2} \le \card{A + B} \right\}.
    \end{aligned}
  \end{equation*}

  Let $S \subset [n]$ be a $\delta$-random set, where $n^{-\alpha} < \delta \le 1 - c$, and observe
  that if there are $A, B \subset \NN$ with $\min\{\card{A}, \card{B}\} \ge k$ and $A + B \subset
  S$, then taking $k$-element subsets of $A$ and $B$ yields a pair in $\mathcal{S}_1 \cup
  \mathcal{S}_2 \cup \mathcal{S}_3$.
  We then bound
  \begin{equation*}
    \prob[\Big]{\exists A, B \subseteq \NN \st \hspace{-2pt} \min\big\{\card{A},
    \card{B}\big\} \ge k,  A + B \subset S} \hspace{-2pt} \le \hspace{-2pt} \sum_{i = 1}^3
    \prob[\big]{\exists (A, B) \in \mathcal{S}_i : A + B \subset S},
  \end{equation*}
  by applying \Cref{stmt:lowerDoubling}, \Cref{stmt:midDoubling}, and \Cref{stmt:largeDoubling}
  with the above parameters, obtaining as a result
  \begin{equation*}
    \prob[\Big]{\exists A, B \subseteq \NN \text{ with } \min\big\{\card{A}, \card{B}\big\} \ge k
    : A + B \subset S} \le \frac{2k}{n^{\gamma' / 64}} + \frac{3k^2}{n^2}
    \le \frac{3k^2}{n^{1 / 64}},
  \end{equation*}
  where the last inequality follows from $\gamma' \ge 1$ and $n$ being sufficiently large,
  completing the proof.
\end{proof}

\Cref{thm:main} now follows easily.

\begin{proof}[Proof of \Cref{thm:main}]
  Fix $0 < \gamma \le 1$ and $c > 0$, and set $\gamma' = \gamma / 2$.
  Take $\alpha > 0$ sufficiently small that $\alpha \le 1/2$ and \Cref{stmt:randomMain} holds with
  parameters $\gamma'$ and $c / 2$.
  For $n^{-\alpha} < \delta \le 1 - c$, choose $\delta'$ with $\delta < \delta' \le 1 - c / 2$
  and $\delta' / \delta \ge 1 + 2^{-r}$ for some fixed $r > 0$ depending only on $\gamma$ and
  $c$, satisfying
  \[
    \frac{3 + \gamma'}{\log(1 / \delta')} \le \frac{3 + \gamma}{\log(1 / \delta)}.
  \]
  Let
  \[
    k = (3 + \gamma) \frac{\log n}{\log(1 / \delta)}
    \qquad \text{and} \qquad
    k' = (3 + \gamma') \frac{\log n}{\log(1 / \delta')},
  \]
  so that $n^{-\alpha} < \delta' \le 1 - c / 2$ and $k' \le k$.

  We show that, with high probability, a $\delta'$-random subset $S$ of $[n]$ satisfies
  $\card{S} \ge \delta n$ and every $A, B \subset \NN$ with
  \[
    \min\big\{\card{A}, \card{B}\big\} \ge k = (3 + \gamma) \frac{\log n}{\log(1 / \delta)}
  \]
  satisfies $A + B \not \subset S$.
  First, since $\card{S}$ is a binomial random variable with mean $\delta' n$, the Chernoff bound gives
  \[
    \prob[\big]{\card{S} < \delta n} \le \exp\big(-2^{-2r - 3}\delta n\big) = o(1),
  \]
  where we used $\alpha \le 1/2$, so it suffices to show that
  \[
    \prob[\Big]{\exists A, B \subseteq \NN \text{ with } \min\big\{\card{A}, \card{B}\big\} \ge k
    : A + B \subset S} = o(1).
  \]
  By \Cref{stmt:randomMain} applied with $\gamma'$, $c / 2$ and $\delta'$, and the fact that
  $k' \le k$, the last probability is at most
  \[
    \frac{3(k')^2}{n^{1 / 64}} = o(1),
  \]
  completing the proof.
\end{proof}

\section{Concluding remarks}\label{sec:concludingRemarks}

In this work, we proved \cite[Conjecture~4.10]{kra2025problems} and answered
\cite[Question~4.12]{kra2025problems} in the negative.
Together with the settling of \cite[Question~4.9]{kra2025problems} by
\citet{hernandez2026+growth}, this leaves a single open problem in the ``Quantitative versions''
section in the work of \citet{kra2025problems}, which we reproduce below using the notation
introduced in \Cref{sec:intro}.

\begin{question}[{\cite[Question 4.11]{kra2025problems}}]\label{stmt:lim}
  Does the limit
  \begin{equation}\label{eq:lim}
    \lim_{n \to \infty} \frac{\phi(\delta, n)}{\log n}
  \end{equation}
  exist, and if so, what is it?
\end{question}

In spite of \Cref{stmt:limInfLimSup} showing that there is only a factor of 3 in the way of
proving that \eqref{eq:lim} exists, we believe that our methods cannot resolve the
\namecref{stmt:lim} completely.
To see that, observe that taking $A = B$ and $\delta = 1/2$ in \Cref{stmt:randomMain}
leads\footnote{Ignoring the replacement of $[n] \subset \ZZ$ by $\ZZ/n\ZZ$.} to a bound on the
clique number of the random Cayley sum graph, a parameter for which we know a lower bound of $2 -
o(1)$~\cite{green2016counting,campos2024on}.
The approach of taking $S$ to be a $\delta$-uniform random subset of $[n]$ therefore cannot
improve the upper bound of $\phi(\delta, n)$ to anything better than $2 \log n / \log
(1/\delta)$, asymptotically.
Note that we could not prove $2 + o(1)$ for the upper bound only because the techniques of
\citet{green2016counting} for the large sumset regime do not transfer to the $A \neq B$ setting,
and we had instead to rely on the estimates in \Cref{sec:largeDoubling}.

\section*{Acknowledgments}

We would like to thank Rob Morris for reading a previous draft of this work, and for suggesting
improvements and pointing out corrections.
We would also like to thank Marcelo Campos for useful discussions.

\renewcommand*{\bibfont}{\normalfont\small}
\printbibliography

\end{document}